\documentclass[11pt]{article}
\usepackage[margin=1in]{geometry}
\usepackage{microtype}
\usepackage{newtxtext}
\usepackage[smallerops]{newtxmath}
\usepackage{amsmath}
\usepackage{mathtools}
\usepackage{booktabs}
\usepackage{enumitem}
\usepackage[hidelinks]{hyperref}
\usepackage{pdflscape}
\usepackage{fancyhdr}

\setlist[itemize]{leftmargin=*,topsep=3pt,itemsep=2pt}
\setlist[enumerate]{leftmargin=*,topsep=3pt,itemsep=2pt}

\usepackage{comment}
\usepackage{bm}
\usepackage{graphicx}
\usepackage{mathrsfs}
\usepackage{multirow}
\usepackage{algorithm}
\usepackage{algpseudocode}
\newtheorem{theorem}{Theorem}[section]
\newtheorem{lemma}{Lemma}[section]
\newtheorem{remark}{Remark}[section]

\newtheorem{definition}{Definition}[section]

\usepackage{pdfpages}
\usepackage{caption}
\usepackage{authblk}
\newenvironment{proof}[1][Proof]{
  \par\noindent\textit{#1. }\ignorespaces
}{
  \hfill$\square$\par
}

\usepackage{subcaption}
\usepackage{makecell}
\usepackage{rotating}
\numberwithin{equation}{section}

\usepackage[numbers,sort&compress]{natbib}
\usepackage{tikz}
\usetikzlibrary{arrows.meta, positioning, shapes.geometric, calc}
\usepackage{wrapfig}
\usepackage{cleveref}

\crefname{hypothesis}{Hypothesis}{Hypotheses}
\crefname{fact}{Fact}{Facts}

\title{An Actor-Critic Framework for Continuous-Time Jump-Diffusion Controls with Normalizing Flows}

\date{}

\author{
Liya Guo\thanks{Yau Mathematical Sciences Center, Tsinghua University, Beijing 100084, China; and Department of Mathematics, Tsinghua University, Beijing 100084, China.\, Email: \texttt{gly22@mails.tsinghua.edu.cn}.}
\qquad
Ruimeng Hu\thanks{Department of Mathematics, and Department of Statistics and Applied Probability,
University of California, Santa Barbara, CA 93106-3080, USA.
\, Email: \texttt{rhu@ucsb.edu}.}
\qquad
Xu Yang\thanks{Department of Mathematics, University of California, Santa Barbara, CA 93106-3080, USA.
\, Email: \texttt{xuyang@math.ucsb.edu}.}
\qquad
Yi Zhu\thanks{Yau Mathematical Sciences Center, Tsinghua University, Beijing 100084, China; and
Yanqi Lake Beijing Institute of Mathematical Sciences and Applications, Beijing 101408, China.
\, Email: \texttt{yizhu@tsinghua.edu.cn}.}
}

\newcommand{\MRshift}{-0.65em}

\ifpdf
\hypersetup{
  pdftitle={An Actor-Critic Framework for Continuous-Time Jump-Diffusion Controls with Normalizing Flows},
  pdfauthor={L. Guo, R. Hu, X. Yang, and Y. Zhu}
}
\fi

\begin{document}

\maketitle

\begin{abstract}
Continuous-time stochastic control with time-inhomogeneous jump–diffusion dynamics is central in finance and economics, but computing optimal policies is difficult under explicit time dependence, discontinuous shocks, and high dimensionality. We propose an actor–critic framework that serves as a mesh-free solver for entropy-regularized control problems and stochastic games with jumps. The approach is built on a time-inhomogeneous ``little'' $q$-function and an appropriate occupation measure, yielding a policy-gradient representation that accommodates time-dependent drift, volatility, and jump terms.
To represent expressive stochastic policies in continuous-action spaces, we parameterize the actor using conditional normalizing flows, enabling flexible non-Gaussian policies while retaining exact likelihood evaluation for entropy regularization and policy optimization. We validate the method on time-inhomogeneous linear–quadratic control, Merton portfolio optimization, and a multi-agent portfolio game, using explicit solutions or high-accuracy benchmarks. Numerical results demonstrate stable learning under jump discontinuities, accurate approximation of optimal stochastic policies, and favorable scaling with respect to dimension and number of agents.

\end{abstract}

\section{Introduction}
Continuous-time stochastic control provides a mathematical framework for dynamical decision making in finance and economics \cite{pham2009continuous}. Many problems such as portfolio selection \cite{dai2023learning,merton1971optimum} can be formulated as controlling stochastic differential equations to maximize (or minimizing) an expected discounted objective. From a computational standpoint, however, classical approaches based on dynamic programming or stochastic maximum principles become difficult to implement when the state dimension is large \cite{CaiFangZhangZhou2024}, and when the underlying dynamics are unknown or only partially specified. These challenges have motivated the development of continuous-time reinforcement learning (RL) methods \cite{wang2020reinforcement,wang2020continuous,jia2022PE,jia2022policy} that combined with neural networks, aiming to learn near-optimal control directly from interaction with the environment without requiring explicit model structure and with improved scalability to higher dimensions.

A growing literature has developed continuous-time analogs of policy evaluation and policy improvement. For policy evaluation, temporal-difference type schemes are derived in \cite{Doya2000,jia2022PE}, providing practical methods for approximating value function directly in continuous time. For policy improvement, \cite{jia2022policy} exploits martingale structure to rewrite policy-gradient objectives as policy-evaluation problems, yielding implementable update rules. From an action-value viewpoint, \cite{jia2023q} studies continuous-time $q$ learning and introduces a first-order surrogate, the ``little'' $q$-function, to avoid the degeneracy of the conventional ``big'' $Q$-function in the continuous-time limit \cite{tallec2019making}. Most of these developments are focused on finite-horizon criteria, with related extensions to mean-field control in \cite{wei2025continuous}. Infinite-horizon continuous-time policy-gradient formulas have appeared more recently, for example in \cite{zhao2023policy}.

In many financial settings, pure diffusion models are inadequate because asset prices and economic factors may exhibit abrupt movements driven by liquidity shocks or macroeconomic events \cite{oksendal2007applied,applebaum2009levy}. Incorporating jumps is therefore essential for capturing heavy tails, discontinuities, and jump risk premia for markets \cite{aydougan2025optimal,bender2023entropy}. This has motivated a growing body of work on deep learning and RL for jump-diffusion dynamics \cite{cheridito2025deep,bo2024continuous,gao2024reinforcement,lu2025multiagent,denkert2025control,guo2023reinforcement,jiang2024robust}. Motivated by the little-$q$ methodology, \cite{bo2024continuous,gao2024reinforcement} extend continuous-time q-learning ideas to stochastic policies and entropy regularization in jump-diffusion settings, while \cite{lu2025multiagent} develops an actor-critic method for deterministic controls in finite-horizon jump-diffusion games, and \cite{denkert2025control} considers optimal switching problems under jump dynamics.

Most existing formulations, however, remain tied to finite-horizon objectives and often adopt Gaussian policy parameterizations in practice. In many situations, the optimal stochastic policy is non-Gaussian; see, for example, \cite{bo2024continuous}. This paper addresses these gaps by developing a learning framework for discounted infinite-horizon control of time-inhomogeneous jump-diffusions under general stochastic (possibly non-Gaussian) policies. Our first contribution is the introduction of a continuous-time little $q$-function and a time-dependent discounted occupation measure, and the establishment of structural properties that connect these objects to policy improvement. These results lead to a policy-gradient representation valid for general time-inhomogeneous jump-diffusions on an infinite horizon. To clarify the relationship with prior work, we provide a comparative summary of continuous-time $q$-function formulations in Table~\ref{tab:q-comparison}.

\begin{table}[htbp]
\centering
\small
\setlength{\tabcolsep}{3pt}
\caption{Comparison continuous-time $Q$/$q$-function formulations.}
\resizebox{\linewidth}{!}{$
\begin{tabular}{lccccc}
\toprule
Work & Time setting & State dynamics & Learning objects & Role of $q$ function & Policy class \\
\midrule

\cite{jia2023q}
& \multirow{2}{*}[\MRshift]{\makecell[c]{Continuous,\\ finite horizon}}
& Diffusion
& \makecell{$V_\psi(t, \bm x)$,\\ $q_\theta(t,\bm x,\bm u)$}
& \multirow{2}{*}[\MRshift]{\makecell[c]{
$\pi^*(\bm u\mid t,\bm x)\propto
\exp\left\{\frac{1}{\gamma} q^*(t,\bm x,\bm u)\right\}$\\
$q$ Learned via \\ martingale orthogonality}}
& \multirow{2}{*}[\MRshift]{\makecell[c]{Gaussian}}
\\
\addlinespace[0.3em]
\cmidrule(lr){1-1}\cmidrule(lr){3-4}

\cite{bo2024continuous,gao2024reinforcement}
&
& Jump-diffusion
& \makecell{$V_\psi(t,\bm x)$,\\ $q_\theta(t,\bm x,\bm u)$}
&
\\
\addlinespace[0.3em]
\midrule

\cite{zhao2023policy}
& \makecell{Continuous,\\ time-homog.,\\ infinite horizon}
& Diffusion
& \makecell{$V_\psi(\bm x)$,\\ $\pi_\theta(\bm x)$}
& \makecell[l]{
Policy gradient thm for $\pi^\ast$ \\
$q$ approximated by GAE} & Gaussian
\\
\midrule

This work
& \makecell{Continuous,\\ time-inhomog.,\\ infinite horizon}
& Jump-diffusion
& \makecell{$V_\psi(t,\bm x)$,\\ $\pi_\theta(t,\bm x)$}
& \makecell[l]{
Policy gradient thm for $\pi^\ast$\\
$q$ approximated by GAE}
& \makecell[c]{General \\ (normalizing flow)}
\\

\bottomrule
\end{tabular}
$}
\label{tab:q-comparison}
\end{table}

Our second contribution is to provide tractable benchmarks by deriving explicit solutions in several canonical specifications, including linear-quadratic control, the Merton problem with jumps, and multi-agent games with jump-driven CARA utilities, together with representative time-inhomogeneous variants. These closed-form policies serve as ground truth for assessing numerical accuracy. Finally, we propose an implementable actor–critic algorithm that combines the derived policy-gradient representation with a conditional normalizing-flow parameterization of stochastic policies. The flow-based construction enables expressive, non-Gaussian distributions for controls while preserving tractable likelihoods and gradients, which is essential in the presence of entropy regularization and policy-gradient-type theorems. Numerical experiments in both low- and high-dimensional regimes demonstrate stable learning behavior across a range of time-dependent jump-diffusion models.

The rest of the paper is organized as follows. Section~\ref{sec:setup} introduces the problem setting, including the classical jump-diffusion stochastic control problem and its entropy-regularized formulation. Section~\ref{sec:method} presents the proposed actor-critic framework: Section~\ref{sec:critic} develops policy evaluation for the critic, Section~\ref{sec:actor} introduces the ``little'' $q$-function and the occupation measure, and develops policy improvement for the actor with its theoretical justification, and Section~\ref{sec:numerical_implementation} details the conditional normalizing flow  parameterization for the actor.
Section~\ref{sec:analytical_examples} states explicit solutions for several canonical problems and reports numerical experiments. We conclude in Section~\ref{sec:conclusion}.

\section{Problem Setup}
\label{sec:setup}

Let $(\Omega,\mathcal F, \mathbb{F}:=(\mathcal F_t)_{t\ge 0},\mathbb P)$ be a filtered probability space satisfying the usual conditions.
Let $\bm W=(\bm W_t)_{t\ge 0}$ be a $d$-dimensional Brownian motion, $N(\mathrm{d}t, \mathrm{d} \bm z)$ be a Poisson random measure corresponding to a L\'evy process $(\bm L_t)_{t\geq 0}$, and $\nu$ be the L\'evy measure on $\mathbb R^d$ satisfying the integrability condition $\int_{\mathbb{R}^d} \min\{|\bm z|^2, 1\}\nu(\mathrm{d}\bm z)<\infty$ \cite{Duan2015Stochastic}.
The  associated compensated Poisson random measure is defined as  $\tilde N(\mathrm{d}t, \mathrm{d} \bm z):=N(\mathrm{d}t, \mathrm{d} \bm z)-\nu(\mathrm{d} \bm z)\,\mathrm{d}t$ and we assume that $\bm W$ and $N$ are independent.

We are interested in finding an optimal control policy $\pi(\cdot\mid t,\bm x)\in\mathcal P(\mathcal A)$ that maximizes an infinite-horizon discounted reward based on the controlled state process $\bm X=(\bm X_t^\pi)_{t\ge 0} \in \mathbb{R}^d$, which is formally described by the It\^o-L\'evy process
\begin{equation}
\label{eq:controlled-SDE-levy_pi}
\begin{aligned}
    \mathrm{d}\bm X_{t}^\pi = \bm b(t,\bm X_{t-}^\pi,\bm u_t)\,\mathrm{d}t
     + \bm\sigma(t,\bm X_{t-}^\pi,\bm u_t)\,\mathrm{d}\bm W_t + \int_{\mathbb{R}^d} \bm\alpha(t,\bm X_{t-}^\pi,\bm u_t,\bm z)\,\tilde N(\mathrm{d}t, \mathrm{d} \bm z)\,,
\end{aligned}
\end{equation}
where the coefficients are measurable maps $(\bm b, \bm \sigma):[0,\infty)\times\mathbb R^{d}\times\mathcal A\to (\mathbb R^{d}, \mathbb{R}^{d\times d})$,
$\bm\alpha:[0,\infty)\times\mathbb R^{d}\times\mathcal A\times\mathbb R^{d}\to\mathbb R^{d}$, and the control $\bm u_t$ is intended to follow the randomized feedback law $\pi(\cdot\mid t,\bm x)$. We assume standard Lipschitz and linear growth conditions on $(\bm b, \bm \sigma, \bm \alpha)$, so that the corresponding SDE admits a unique strong solution for every admissible control process (cf. \cite{oksendal2007applied}).
With this state dynamics, we consider the following entropy-regularized reward:
\begin{equation}
\label{eq:tilde-f-def}
\tilde f(s,\bm y;\pi):= \int_{\mathcal A}
\big(
f(s,\bm y,\bm u)-\gamma\log\pi(\bm u\mid s,\bm y)
\big)\,\pi(\bm u\mid s,\bm y)\mathrm{d} \bm u,
\end{equation}
where $f$ is the standard running cost and we consider $\mathcal S\big(\pi(\cdot\mid s,\bm y)\big) := -\int_{\mathcal A}\pi(\bm u\mid s,\bm y)\log\pi(\bm u\mid s,\bm y)\,\mathrm{d} \bm u$,
the Shannon entropy, which encourages exploration and improves numerical stability. For long-term control, let $\beta>0$ be a discount factor. We then define the entropy-regularized expected discounted reward by
\begin{equation}
\label{eq:Jgamma-shift}
\begin{aligned}
J(t,\bm x;\pi) &:= \mathbb{E}\!\Big[
\int_{t}^{\infty} e^{-\beta (s-t)}
\tilde f\big(s, \bm X_s^\pi; \pi\big)\,
\mathrm{d}s \mid \bm X_{t}^\pi = \bm x
\Big],
\end{aligned}
\end{equation}
where $\bm X_s^\pi$ solves the exploratory dynamics \eqref{eq:explor_SDE_PO}, and $\gamma>0$ characterizes the intensity of regularization.

For a fixed policy $\pi$, the function $J(t,\bm x;\pi)$ satisfies (cf. \cite[Lemma~3]{jia2022policy})
\begin{equation}
\label{eq:policy-eval-PIDE}
0
=\partial_t J(t,\bm x;\pi)
+ \mathcal L^{\pi} J(t,\bm x;\pi)
+ \tilde f(t,\bm x;\pi)
-\beta J(t,\bm x;\pi),
\end{equation}
where $\mathcal L^{\pi}$ is the $\pi$-averaged infinitesimal generator
\begin{equation}
\label{eq:gen-pi}
(\mathcal L^\pi \varphi)(t,\bm x)
:= \int_{\mathcal A} (\mathcal L^{\bm u}\varphi)(t,\bm x)\,\pi(\bm u\mid t,\bm x) \mathrm{d} \bm u,
\quad \varphi\in C_c^{1,2}([0,T]\times\mathbb R^d),\ \forall\,T>0\,,
\end{equation}
and $\mathcal L^{\bm u}$ is the generator for fixed control sampling
\begin{equation}
\label{eq:generator-general-levy}
\begin{aligned}
(\mathcal L^{\bm u}\varphi)(t,\bm x)
&= \bm b(t,\bm x,\bm u)\cdot\nabla_{\bm x}\varphi(t,\bm x)
   + \tfrac12\operatorname{Tr}\big(\bm\sigma(t,\bm x,\bm u)\bm\sigma(t,\bm x,\bm u)^\top\nabla_{\bm x}^2\varphi(t,\bm x)\big)\\
&\,
   + \int_{\mathbb R^{d}}
      \big(
        \varphi\big(t,\bm x+\bm\alpha(t,\bm x,\bm u,\bm z)\big) - \varphi(t,\bm x)
        - \bm\alpha(t,\bm x,\bm u,\bm z)\cdot\nabla_{\bm x}\varphi(t,\bm x) \big)\,\nu(\mathrm{d} \bm z).
\end{aligned}
\end{equation}
The optimal value function is
\begin{equation}
\label{eq:value_func_soft}
V(t,\bm x) :=\sup_{\pi} J(t,\bm x;\pi) =\sup_{\pi}\,
\mathbb E\!\Big[
\int_t^\infty e^{-\beta(s-t)}\,
\tilde f\big(s, \bm X_s^\pi;\pi\big)\,\mathrm{d}s \mid \bm X_t^\pi=\bm x
\Big],
\end{equation}
which satisfies the entropy-regularized Hamilton-Jacobi-Bellman (HJB) equation by dynamic programming (cf. \cite{oksendal2007applied})
\begin{equation}
\label{eq:soft-HJB-opt-general}
\begin{aligned}
0 = \partial_t V(t,\bm x)
+ \sup_{\pi(\cdot\mid t,\bm x)}
\int_{\mathcal A}
\Big[
\mathscr{H}\big(
t,\bm x,\bm u,\nabla_{\bm x} V(t,\bm x),\nabla_{\bm x}^2 V(t,\bm x)
\big)-\gamma\log\pi(\bm u\mid t,\bm x)
\Big]\, \pi(\bm u\mid t,\bm x)\mathrm{d} \bm u
-\beta V(t,\bm x).
\end{aligned}
\end{equation}
Here
$\mathscr H:[0,\infty)\times\mathbb R^{d}\times\mathcal A\times\mathbb R^{d}\times\mathbb S^{d}\to\mathbb R$ is the Hamiltonian defined by
\begin{equation}
\label{eq:Hamiltonian-general}
\begin{aligned}
&\mathscr{H}(t,\bm x,\bm u,\nabla_{\bm x} V(t,\bm x), \nabla_{\bm x}^2 V(t,\bm x))
:= \bm b(t,\bm x,\bm u)\cdot \nabla_{\bm x} V(t,\bm x) + \tfrac12 \operatorname{Tr}\big(\bm\sigma(t,\bm x,\bm u)\bm\sigma(t,\bm x,\bm u)^\top \nabla_{\bm x}^2 V(t,\bm x)\big) \\
&\quad + \int_{\mathbb R^{d}}
\big(
V\big(t,\bm x+\bm\alpha(t,\bm x,\bm u,\bm z)\big) - V(t,\bm x)
- \bm\alpha(t,\bm x,\bm u,\bm z)\cdot \nabla_{\bm x} V(t,\bm x)
\big)\,\nu(\mathrm{d} \bm z) + f(t,\bm x,\bm u)\,,
\end{aligned}
\end{equation}
and $\mathbb S^d$ denotes the space of real $d\times d$ symmetric matrices. Therefore, the optimal policy $\pi^*$ is solved as a maximizer of the $\sup$ part in \eqref{eq:soft-HJB-opt-general}.

It is worth noting that, for \eqref{eq:controlled-SDE-levy_pi}, interpreting a randomized feedback law $\pi(\cdot\mid t,\bm x)$ as a continuously sampled control $\bm u_t\sim\pi(\cdot\mid t,\bm X_t)$ is subtle in continuous time. As discussed in \cite{jia2025erratum,jia2025accuracy}, a measurability issue arises: to make \eqref{eq:controlled-SDE-levy_pi} well posed, one needs a process $\bm u$ that is $\mathcal F$-progressively measurable and satisfies $\bm u_t\mid(\bm X_t=\bm x)\sim\pi(\cdot\mid t,\bm x)$ for each $t$. Such a construction is not immediate on a fixed stochastic basis, since time is uncountable and one cannot literally ``sample independently at every instant''. To avoid this issue, following \cite{jia2025erratum,jia2025accuracy}, we work with the exploratory state process
\begin{equation}
\label{eq:explor_SDE_PO}
\begin{aligned}
\mathrm{d}\tilde{\bm X}_t^\pi= \tilde{\bm b}(t,\tilde{\bm X}_{t-}^\pi;\pi)\,\mathrm{d}t
 + \tilde{\bm\sigma}(t,\tilde{\bm X}_{t-}^\pi;\pi)\,\mathrm{d}\bm W_t + \int_{\mathbb R^d\times[0,1]^m}
\bm\alpha\!\bigl(t,\tilde{\bm X}_{t-}^\pi,\; G^\pi(t,\tilde{\bm X}_{t-}^\pi,\bm r),\; \bm z\bigr)\,
\tilde N(\mathrm{d}t,\mathrm{d} \bm z,\mathrm{d} \bm r),
\end{aligned}
\end{equation}
where $G^\pi:[0,\infty)\times\mathbb R^d\times[0,1]^m\to\mathcal A$ is a measurable function
such that $(G^\pi(t,\bm x,\cdot))_{\#}\mathcal U \\=\pi(\cdot\mid t,\bm x)$, when $\mathcal U$ is the Lebesgue probability measure on $[0,1]^m$, $N(\mathrm{d}t,\mathrm{d} \bm z,\mathrm{d} \bm r)$ is a Poisson random measure on $(0,\infty)\times\mathbb R^d\times[0,1]^m$ with compensator $\nu(\mathrm{d} \bm z)\,\mathcal U(\mathrm{d} \bm r)\,\mathrm{d}t$, independent of the Brownian motion $\bm W$, $\tilde N(\mathrm{d}t,\mathrm{d} \bm z,\mathrm{d} \bm r)\!
:=\! N(\mathrm{d}t,\mathrm{d} \bm z,\mathrm{d} \bm r)-\nu(\mathrm{d} \bm z)\,\mathcal U(\mathrm{d} \bm r)\,\mathrm{d}t$ is the compensated measure, and $\tilde {\bm b}, \tilde {\bm \Sigma}$ are defined as
\begin{equation}
\label{eq:tilde_b_tilde_sigma}
\begin{aligned}
\tilde{\bm b}(t,\bm x;\pi):= \int_{\mathcal A} \bm b(t,\bm x,\bm u)\,\pi(\bm u\mid t,\bm x) \mathrm{d} \bm u,\,\,
\tilde{\bm{\Sigma}} = \tilde{\bm\sigma}^2(t,\bm x;\pi):= \int_{\mathcal A} \bm\sigma(t,\bm x,\bm u)\bm\sigma(t,\bm x,\bm u)^\top\,\pi(\bm u\mid t,\bm x) \mathrm{d} \bm u.
\end{aligned}
\end{equation}
In what follows, we take the exploratory SDE in \eqref{eq:explor_SDE_PO} (associated with the generator $\mathcal L^\pi$) as the definition of the state process under the stochastic policy $\pi$, and we drop the tilde when no confusion arises.

Note that when $\gamma = 0$ in \eqref{eq:tilde-f-def}, the stochastic control problem reduces to the standard problem with deterministic control. In this case, the state process is governed by the classical controlled Itô–Lévy SDE with an admissible progressively measurable control process $\bm u=(\bm u_s)_{s\ge t}$, taking values in $\mathcal A\subset\mathbb R^m$, and the associated HJB equation becomes
\begin{equation}
\label{eq:HJB-hard}
    0
    = \partial_t V(t,\bm x)
      + \sup_{\bm u\in\mathcal A}
        \big\{
            (\mathcal L^{\bm u}V)(t,\bm x)
            + f(t,\bm x,\bm u)
        \big\}
      - \beta V(t,\bm x)\,,
\end{equation}
where the generator $\mathcal L^{\bm u}$ is defined as \eqref{eq:generator-general-levy}.

\section{Actor-Critic for Time Inhomogeneous Jump Diffusion Control}
\label{sec:method}
We solve the infinite-horizon stochastic control problem via reinforcement learning (RL) using an actor-critic framework
\cite{barto2012neuronlike}.
In RL, the \emph{actor} usually refers to the randomized policy $\pi$ and the \emph{critic} refers to the value $J(\cdot;\pi)$, used to evaluate the goodness of the current policy $\pi$. The actor-critic method consists of two steps: policy evaluations for the critic and policy improvement for the actor. By performing them interactively, one hopes to reach the optimal policy and value function $V$.

Most existing continuous-time RL work is developed for finite horizons and/or dynamics driven only by Brownian motions
\cite{jia2022policy,jia2023q,jia2022PE,wang2020reinforcement}.
In the infinite-horizon setting considered here, the policy update step cannot be reduced to maximizing a finite-interval objective
in the same manner as in finite-horizon formulations \cite{lu2025multiagent}.
This necessitates a policy improvement principle that is consistent with discounting and time inhomogeneity, and that remains valid under jump-diffusion dynamics.
The resulting actor update scheme is developed in Section~\ref{sec:actor}, while the critic is introduced firstly below.

\subsection{Critic: Policy Evaluation}
\label{sec:critic}
Consider the critic $V_\psi$, parameterized by $\psi$.
Our goal is to learn an accurate value approximation from incremental samples, without explicitly solving the PIDE \eqref{eq:policy-eval-PIDE}. To this end, we use the continuous-time Bellman principle \cite{jia2022PE}, which leads to temporal-difference learning: we update the critic by minimizing TD errors computed along sampled trajectories.

\smallskip
\noindent\textbf{Bellman equation and TD error.}
Fix a stochastic feedback policy $\pi$.
Recall the entropy-regularized performance functional $J$ in \eqref{eq:Jgamma-shift} and the entropy-regularized reward $\tilde f(\cdot;\pi)$ in \eqref{eq:tilde-f-def}.
For any fixed deterministic $\delta_t>0$, by the law of total expectation and the Markov property under $\pi$, the discounted performance $J$ satisfies the Bellman equation
\begin{equation}
\label{eq:ct_bellman_policy_J_corrected}
\begin{aligned}
J(t,\bm X_t^\pi;\pi)
&= \mathbb{E}\!\Big[\int_t^{t+\delta_t} e^{-\beta(s-t)}
\int_{\mathcal A}
\big(
f(s,\bm X_{s-}^\pi,\bm u)
-\gamma\log\pi(\bm u\mid s,\bm X_{s-}^\pi)
\big)\,\pi(\bm u\mid s,\bm X_{s-}^\pi) \mathrm d\bm u\,\mathrm ds \\
&\qquad +e^{-\beta\delta_t}\,J(t+\delta_t,\bm X_{t+\delta_t}^\pi;\pi)
\ \big|\ \mathcal F_t\Big].
\end{aligned}
\end{equation}
We then define the one-step TD error over $[t,t+\delta_t)$ by
\begin{equation}
\label{eq:ct_TD_error_J}
\begin{aligned}
\delta_{\mathrm{TD}}^{t}
:=
&\int_t^{t+\delta_t} e^{-\beta(s-t)}
\int_{\mathcal A}
\big(
f(s,\bm X_{s-}^\pi,\bm u)
-\gamma\log\pi(\bm u\mid s,\bm X_{s-}^\pi)
\big)\,
\pi(\bm u\mid s,\bm X_{s-}^\pi) \mathrm d\bm u\,\mathrm ds\\
&\quad + e^{-\beta\delta_t}V_\psi(t+\delta_t,\bm X_{t+\delta_t}^\pi;\pi)
- V_\psi(t,\bm X_t^\pi;\pi),
\end{aligned}
\end{equation}
and \eqref{eq:ct_bellman_policy_J_corrected} implies that, for the critic $V_\psi$ that evaluates $J$ exactly, $\mathbb E \big[\delta_{\mathrm{TD}}^{t}\mid\mathcal F_t\big]=0.$

\smallskip
\noindent\textbf{Martingale-corrected TD error.}
By It\^o's formula, the last two terms in the one-step TD error admit the decomposition
\begin{equation*}
\resizebox{\linewidth}{!}{$
e^{-\beta\delta_t}V_\psi(t+\delta_t,\bm X_{t+\delta_t}^\pi;\pi) - V_\psi(t,\bm X_t^\pi;\pi)
=
\int_t^{t+\delta_t} e^{-\beta(s-t)}
\big(\partial_s V_\psi + \mathcal L^{\pi}V_\psi - \beta V_\psi\big)(s,\bm X_{s-}^\pi;\pi)\,\mathrm{d}s
\;+\;\mathcal{I}_{t,t+\delta_t}^{\pi},
$}
\end{equation*}
where $\mathcal{I}_{t,t+\delta_t}^{\pi}$ defined as
\begin{equation}
\label{eq:mart_inc_J_corrected}
\begin{aligned}
\mathcal{I}_{t,t+\delta_t}^{\pi}
:=\;& \int_t^{t+\delta_t} e^{-\beta(s-t)}
      \big(\bm\sigma(s,\bm X_{s-}^\pi,\bm u_s)^\top \nabla_x V_\psi(s,\bm X_{s-}^\pi;\pi)\big)^\top \mathrm{d}\bm W_s \\
&+\int_t^{t+\delta_t}\!\!\int_{\mathbb{R}^d} e^{-\beta(s-t)} \big[
    V_\psi\big(s,\bm X_{s-}^\pi+\bm\alpha(s,\bm X_{s-}^\pi,\bm u_s,\bm z);\pi\big)
    - V_\psi(s,\bm X_{s-}^\pi;\pi)
\big]\tilde N(\mathrm{d}s,\mathrm{d} \bm z),
\end{aligned}
\end{equation}
captures the instantaneous fluctuations induced by Brownian and jump noises. As discussed in \cite{zhou2021actor,lu2025multiagent}, it has mean zero but adds extra variance to the learning signal. Therefore, subtracting $\mathcal{I}_{t,t+\delta_t}^{\pi}$ from the one-step TD error reduces variance while preserving unbiasedness.

Accordingly, we define the martingale-corrected TD error by:
\begin{equation}
\label{eq:ct_martingale_corrected_TD}
\tilde\delta_{\mathrm{TD}}^{t}
:=\delta_{\mathrm{TD}}^{t} -\mathcal{I}_{t,t+\delta_t}^{\pi}
=
\int_t^{t+\delta_t} e^{-\beta(s-t)}
    \big(
      \tilde f(s,\bm X_{s-}^\pi;\pi)
        + \partial_s V_\psi + \mathcal L^{\pi} V_\psi - \beta V_\psi
    \big)(s,\bm X_{s-}^\pi;\pi)\,\mathrm{d}s.
\end{equation}
If the critic $V_\psi$ evaluates $J$ exactly, i.e., it  solves the PIDE \eqref{eq:policy-eval-PIDE}, then $\tilde\delta_{\mathrm{TD}}^{t} = 0$ almost surely.

\subsection{Actor: Policy Improvement}
\label{sec:actor}
Policy improvement updates the actor using the critic’s value information to increase the expected discounted reward. A popular approach is policy gradient: the actor is parameterized (for example by neural networks), and the critic is used to construct an estimator of the gradient of the objective with respect to the actor parameters, which then drives the actor update.

In discrete-time with state/action space, the action-value $Q$ function is a common choice for this purpose. In continuous time and state/action space, however, a direct analogue of discrete-time $Q$-learning is intrinsically delicate:
the standard (``capital'' $Q$) action-value function degenerates to the value function, and na\"ive discretization-based updates can be highly sensitive to the time step~\cite{schulman2015high,jia2023q}. These issues motivate the ``little'' $q$-formulation advocated in~\cite{jia2023q,zhao2023policy,bo2024continuous}.

Following this line, we introduce a time-inhomogeneous ``little'' $q$-function for infinite-horizon jump-diffusion control (Section~\ref{sec:q_func}) and derive the corresponding policy gradient theorem  (Theorem~\ref{thm:PG} in Section~\ref{sec:PG}).
Because the resulting gradient depends on $q(\cdot;\pi)$ and is not directly implementable from data, we adopt a generalized advantage estimator justified by Lemma~\ref{lem:gae-approximation}. We remark that the extension to the time-inhomogeneous case is not trivial, as while the critic can be extended via standard time-augmented evaluation, the actor update is not a trivial ``add $t$'' modification in the infinite-horizon jump-diffusion setting: explicit time dependence interacts with discounting and changes the relevant occupation measure on $[t,\infty)\times\mathbb R^d$. This motivates deriving a time-inhomogeneous ``little'' $q$-function (including the $\frac{\partial J}{\partial t}$ term) and a corresponding policy gradient theorem. We emphasize that the time-inhomogeneous extension is nontrivial: although the critic can be handled via time augmentation, explicit time dependence interacts with discounting and alters the discounted occupation measure on $[t,\infty)\times\mathbb R^d$, so the actor update is not obtained by a simple ``add $t$'' modification. This motivates deriving the time-derivative term in $q$ and the accompanying policy gradient identity.

\subsubsection{Occupation Measure and $q$-Function}
\label{sec:q_func}

To accommodate possible time inhomogeneity in the  infinite-horizon setting, we first define a discounted occupation measure on $[t, \infty) \times \mathbb{R}^d$, extending \cite[Def.~2]{zhao2023policy}.
This definition is the $\beta$-potential of $\bm X^\pi$ and characterizes the discounted visitation frequencies of the time-state process starting from $(t,\bm x)$.

\begin{definition}
\label{def:beta_discount-inhom-shift}
Fix $\beta>0$. Let $(\bm X_s^\pi)_{s\ge t}$ denote the exploratory dynamics \eqref{eq:explor_SDE_PO} under a stochastic policy $\pi$  starting at $\bm X_t^\pi = \bm x$. The $\beta$-discounted occupation measure of $\bm X^\pi$ is defined by
\begin{equation}
\label{eq:mu-time-inhom-shift-def}
\mu^{\pi,t,\bm x}(A)
:= \mathbb E\Big[\int_{t}^{\infty} e^{-\beta (s-t)}
\,\mathbf 1_{\{(s,\bm X_s^\pi)\in A\}}\,\mathrm{d}s \Big],
\qquad
A\in \mathcal B([t,\infty)\times\mathbb R^d).
\end{equation}
This is a finite measure on $[t, \infty) \times \mathbb R^d$ with total mass $\mu^{\pi,t,\bm x}\big([t,\infty)\times\mathbb R^d\big) =
\mathbb E[\int_{t}^{\infty}$ $e^{-\beta (s-t)}\,\mathrm{d}s ]=\int_{t}^{\infty} e^{-\beta (s-t)}\,\mathrm{d}s = \beta^{-1}.$ Unless otherwise stated, expectations are taken under the path measure induced by the policy currently under discussion.

\end{definition}

We next derive the little $q$-function $q(t,\bm x,\bm u;\pi)$, which quantifies the instantaneous advantage of taking action $\bm u$ at $(t, \bm x)$ and then reverting to the current policy $\pi$. We sketch the main idea and defer the details to Section~\ref{appen:derivation_q} in the supplementary materials.

Fix $\delta_t>0$, $\bm u\in\mathcal A$, and a baseline policy $\pi$. We consider a perturbed control: on the short interval $[t,t+\delta_t)$, we apply the constant action $\bm u$, and for $s \geq t+\delta_t$, we follow $\pi$.
Let $(\bm X_s^{\bm u})_{s\ge t}$ denote the resulting state process with $\bm X_t^{\bm u}=\bm x$ (i.e., it solves the strict-control SDE \eqref{eq:controlled-SDE-levy_pi} on $[t,t+\delta_t)$ with action $\bm u$ and then the exploratory SDE \eqref{eq:explor_SDE_PO} under $\pi$ on $[t+\delta_t,\infty)$, initialized at $(t+\delta_t,\bm X_{t+\delta_t}^{\bm u})$).
Define the corresponding discounted reward $Q_{\delta_t}(t,\bm x,\bm u;\pi)$ by
\begin{equation}
\begin{aligned}
\label{eq:Q_delta}
Q_{\delta_t}(t, \bm x, \bm u ; \pi)
:= \mathbb{E}\Big[\int_t^{t+\delta_t} e^{-\beta(s-t)}\, f\bigl(s, \bm X_s^{\bm u}, \bm u \bigr)\,\mathrm{d}s +\int_{t+\delta_t}^\infty e^{-\beta(s-t)}
\tilde f\bigl(s, \bm X_s^{\bm u}; \pi \bigr)\,\mathrm{d}s \mid \bm X_t^{\bm u} = \bm x
\Big],
\end{aligned}
\end{equation}
A first-order expansion (see Section~\ref{appen:derivation_q} in the supplementary materials) yields
\begin{equation}\label{eq:Qderive}
    \begin{aligned}
Q_{\delta_t}\bigl(t, \bm x, \bm u ; \pi\bigr)= J\bigl(t, \bm x; \pi\bigr) + \Bigl(
\partial_t J\bigl(t, \bm x; \pi\bigr)  +\mathscr{H}\bigl(t, \bm x,\bm u, \nabla_x J\bigl(t, \bm x ; \pi\bigr), \nabla_x^2 J\bigl(t, \bm x; \pi\bigr) \bigr)
- \beta J\bigl(t, \bm x ; \pi\bigr)
\Bigr)\,\delta_t + o(\delta_t),
\end{aligned}
\end{equation}
where $\mathscr{H}$ is the Hamiltonian defined in \eqref{eq:Hamiltonian-general}. This motivates the definition of the (little) $q$-function:
\begin{equation}
\label{eq:q_function}
    q(t,\bm x,\bm u;\pi)
    := \partial_t J(t,\bm x; \pi)
    + \mathscr{H}\bigl(t, \bm x,\bm u, \nabla_x J(t,\bm x; \pi), \nabla_x^2 J(t,\bm x; \pi)\bigr)
    - \beta J(t,\bm x; \pi).
\end{equation}
Indeed, $q(t,\bm x,\bm u;\pi)$ is the leading-order marginal gain per unit time of deviating from $\pi$ to $\bm u$.

Compared with \cite{zhao2023policy}, our definition includes the additional time-derivative term $\frac{\partial J}{\partial t}$ arising from time inhomogeneity. Compared with \cite{jia2023q}, we incorporate jump-diffusion dynamics and an infinite-horizon discounted objective through the Hamiltonian.
Closely related jump extensions of the little-$q$ framework include \cite{bo2024continuous,gao2024reinforcement}; \cite{bo2024continuous} focuses on Poisson point processes with Tsallis entropy regularization, while \cite{gao2024reinforcement} considers a different use of the $q$-function for policy updates.

\subsubsection{Policy Gradient}
\label{sec:PG}

With the time–inhomogeneous $q$-function introduced, we are now ready to derive a policy gradient formula in our infinite-horizon jump-diffusion setting. We begin with two lemmas.

\begin{lemma}
\label{lem:occupation-identity-inhom-shift}
Under the conditions in  Definition~\ref{def:beta_discount-inhom-shift}, for any measurable function $\varphi:[0,\infty)\times\mathbb{R}^d\to\mathbb{R}$ such that $\mathbb{E}\!\left[\int_t^\infty e^{-\beta (s-t)}|\varphi(s,\bm X_s^\pi)|\,\mathrm{d}s \right]<\infty$, we have
\begin{equation}\label{eq:occupation-identity-inhom-shift}
\mathbb{E}\Big[\int_{t}^{\infty} e^{-\beta (s-t)}\,\varphi(s,\bm X_s^\pi)\,\mathrm{d}s \Big]
=\int_{[t,\infty)\times\mathbb{R}^d} \varphi(s,\bm y)\,\mu^{\pi,t, \bm x}(\mathrm{d} s,\mathrm{d} \bm y)\,.
\end{equation}

\end{lemma}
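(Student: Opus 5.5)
The plan is the standard measure-theoretic bootstrap: indicators, then simple functions, then nonnegative measurable functions by monotone convergence, then integrable functions by splitting into positive and negative parts. Throughout, $\mu^{\pi,t,\bm x}$ is the finite measure of Definition~\ref{def:beta_discount-inhom-shift}. We regard $(s,\omega)\mapsto(s,\bm X_s^\pi(\omega))$ as a jointly measurable map from $[t,\infty)\times\Omega$ into $[t,\infty)\times\mathbb R^d$. This holds because $\bm X^\pi$ is càdlàg and adapted, hence progressively measurable.

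\textbf{Step 1 (indicators).} For $\varphi=\mathbf 1_A$ with $A\in\mathcal B([t,\infty)\times\mathbb R^d)$, identity \eqref{eq:occupation-identity-inhom-shift} is exactly the definition \eqref{eq:mu-time-inhom-shift-def}. Functions defined on $[0,\infty)\times\mathbb R^d$ enter only through their restriction to $[t,\infty)\times\mathbb R^d$, so this restriction is harmless.

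\textbf{Step 2 (simple and nonnegative functions).} Both sides of \eqref{eq:occupation-identity-inhom-shift} are linear in $\varphi$, so the identity extends to nonnegative simple functions. For a general nonnegative measurable $\varphi$, choose simple $0\le\varphi_n\uparrow\varphi$. On the right-hand side, apply monotone convergence with respect to $\mu^{\pi,t,\bm x}$. On the left-hand side, apply monotone convergence twice: first in $s$ pathwise, then in $\omega$ under $\mathbb E$. Equivalently, one can apply it once on the product space $[t,\infty)\times\Omega$ with the measure $e^{-\beta(s-t)}\mathrm ds\otimes\mathbb P$, using Tonelli to justify the interchange. The identity then holds in $[0,\infty]$ for every nonnegative measurable $\varphi$.

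\textbf{Step 3 (integrable functions).} For general $\varphi$, write $\varphi=\varphi^+-\varphi^-$ and apply Step 2 to $|\varphi|$. The integrability hypothesis gives $\int|\varphi|\,\mathrm d\mu^{\pi,t,\bm x}<\infty$. Hence both sides are finite for $\varphi^\pm$, and subtracting the two identities yields the claim. Fubini is now justified, so the left-hand side is well defined.

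The only point requiring care is the joint measurability in $(s,\omega)$ underlying Steps 1 and 2, which is needed for the Tonelli interchange. Since the exploratory SDE \eqref{eq:explor_SDE_PO} has a càdlàg adapted strong solution, this measurability is standard. The remaining steps are routine.
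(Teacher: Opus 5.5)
Your proof is correct. It is the standard extension the paper has in mind: the paper gives no proof of its own and only says the lemma follows from the occupation-time formula (i.e., from Definition~\ref{def:beta_discount-inhom-shift}), extending the time-homogeneous result it cites, and your chain of indicators, simple functions, monotone convergence and then $\varphi^\pm$, together with the joint measurability of $(s,\omega)\mapsto(s,\bm X_s^\pi(\omega))$ that justifies Tonelli, supplies exactly that argument.
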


This lemma extends \cite[Lemma~1]{zhao2023policy} to our time-inhomogeneous setting and follows from the occupation time formula.
 It states that any discounted pathwise reward $\varphi(s, \bm X_s^\pi)$ can be expressed as an integral of $\varphi$ with respect to the discounted time-state occupation measure $\mu^{\pi,t,\bm x}$. This identity will allow us to pass between expectations along trajectories and integrals over time-state space, which is crucial for writing performance differences in a concise integral form.

\begin{lemma}
\label{lemma:gen_property_shift-final}
Let $\varphi \in C^{1,2}([0,\infty)\times\mathbb R^d)$ be bounded and $(\bm X_s^\pi)_{s\ge t}$ follow the exploratory dynamics \eqref{eq:explor_SDE_PO} under a stochastic policy $\pi$ with $\bm X_t^\pi = \bm{x}$. Then for all $t\ge0$ and $\bm x\in\mathbb R^d$,
\begin{equation}
\label{eq:discounted-dynkin-shift-final}
\mathbb E\Big[\int_{t}^\infty e^{-\beta (s-t)}
\big(-\partial_s\varphi - \mathcal L^\pi \varphi + \beta\varphi\big)(s, \bm X_s^\pi)\,\mathrm{d}s \Big]
= \varphi(t, \bm x),
\end{equation}
where $\mathcal L^\pi$ is the infinitesimal generator of $\bm X^\pi$ defined in \eqref{eq:gen-pi}.
\end{lemma}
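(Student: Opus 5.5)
The plan is to apply It\^o's formula for jump--diffusions to the discounted process $s\mapsto e^{-\beta(s-t)}\varphi(s,\bm X_s^\pi)$ on a finite interval $[t,T]$, take expectations so the martingale parts vanish, and then let $T\to\infty$.

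\textbf{Step 1: It\^o expansion on $[t,T]$.} Along the exploratory dynamics \eqref{eq:explor_SDE_PO}, for $T>t$,
\begin{equation*}
e^{-\beta(T-t)}\varphi(T,\bm X_T^\pi)-\varphi(t,\bm x)
=\int_t^T e^{-\beta(s-t)}\big(\partial_s\varphi+\mathcal L^\pi\varphi-\beta\varphi\big)(s,\bm X_{s-}^\pi)\,\mathrm ds+M_T-M_t .
\end{equation*}
Here $M$ is the sum of two stochastic integrals: a Brownian integral with integrand $e^{-\beta(s-t)}\nabla_{\bm x}\varphi^\top\tilde{\bm\sigma}$, and a compensated Poisson integral with integrand $e^{-\beta(s-t)}\big[\varphi(s,\bm X_{s-}+\bm\alpha(s,\bm X_{s-},G^\pi(s,\bm X_{s-},\bm r),\bm z))-\varphi(s,\bm X_{s-})\big]$.

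The $\mathrm{d}s$-integrand is exactly $\mathcal L^\pi\varphi$ for two reasons:
\begin{itemize}
\item The drift and diffusion parts match because of \eqref{eq:tilde_b_tilde_sigma}.
\item The compensator of the jump part is $\nu(\mathrm d\bm z)\,\mathcal U(\mathrm d\bm r)\,\mathrm ds$. Since $(G^\pi)_\#\mathcal U=\pi$, the $\bm r$-integral turns into the $\pi(\mathrm d\bm u)$-average in \eqref{eq:gen-pi}.
\end{itemize}
Note that $\bm X_{s-}$ and $\bm X_s$ differ only at countably many times, so they can be interchanged inside $\mathrm{d}s$-integrals.

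\textbf{Step 2: removing the martingale terms.} Introduce a localizing sequence of stopping times $\tau_n\uparrow\infty$, for instance exit times of $\bm X^\pi$ from balls of radius $n$. Stopping at $T\wedge\tau_n$ makes the local martingale a true martingale, so its expectation vanishes. Then let $n\to\infty$:
\begin{itemize}
\item The boundary term is handled by bounded convergence, since $\varphi$ is bounded.
\item The drift term is handled by dominated convergence, which requires $\mathbb E\int_t^T|(\partial_s\varphi+\mathcal L^\pi\varphi-\beta\varphi)(s,\bm X_s)|\,\mathrm ds<\infty$.
\end{itemize}
This integrability is the main obstacle. Boundedness of $\varphi$ alone does not control $\partial_s\varphi$, $\nabla\varphi$, $\nabla^2\varphi$, or the L\'evy integral. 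I would adopt, as implicitly needed for \eqref{eq:discounted-dynkin-shift-final} to make sense, that the left-hand side integrand is integrable against $\mathrm ds\otimes\mathbb P$ with the discount weight. Concretely, assume $\partial_s\varphi,\nabla\varphi,\nabla^2\varphi$ have polynomial growth and use the linear growth of the coefficients together with moment bounds on $\bm X^\pi$. For the jump term, split the L\'evy integral:
\begin{itemize}
\item On $|\bm z|\le1$, use a second-order Taylor bound, giving $\tfrac12\|\nabla^2\varphi\|\,|\bm\alpha|^2$.
\item On $|\bm z|>1$, use $2\|\varphi\|_\infty$ plus $|\bm\alpha||\nabla\varphi|$.
\end{itemize}
Both pieces are finite thanks to $\int\min\{|\bm z|^2,1\}\,\mathrm d\nu<\infty$ and the growth assumptions on $\bm\alpha$.

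\textbf{Step 3: $T\to\infty$.} Taking expectations gives
\begin{equation*}
\varphi(t,\bm x)=\mathbb E\Big[\int_t^T e^{-\beta(s-t)}\big(-\partial_s\varphi-\mathcal L^\pi\varphi+\beta\varphi\big)(s,\bm X_s^\pi)\,\mathrm ds\Big]+e^{-\beta(T-t)}\mathbb E[\varphi(T,\bm X_T^\pi)].
\end{equation*}
The last term is bounded by $e^{-\beta(T-t)}\|\varphi\|_\infty\to0$, because $\beta>0$. The integral converges to the infinite-horizon expectation by dominated convergence, using the integrability from Step 2. This yields \eqref{eq:discounted-dynkin-shift-final}. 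Equivalently, by Lemma~\ref{lem:occupation-identity-inhom-shift}, the result can be written as $\int(-\partial_s\varphi-\mathcal L^\pi\varphi+\beta\varphi)\,\mathrm d\mu^{\pi,t,\bm x}=\varphi(t,\bm x)$.
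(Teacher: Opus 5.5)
Your proposal is correct and follows the route the paper points to, namely the discounted It\^o--Dynkin argument it attributes to Zhao et al.'s Lemma~8: It\^o's formula for $e^{-\beta(s-t)}\varphi(s,\bm X_s^\pi)$, with the $\mathcal U$-average of the jump compensator becoming the $\pi$-average in $\mathcal L^\pi$, then localization to remove the martingale terms, then $T\to\infty$ using boundedness of $\varphi$ and $\beta>0$. Your explicit discounted-integrability hypothesis on $-\partial_s\varphi-\mathcal L^\pi\varphi+\beta\varphi$ is a legitimate patch for something the statement leaves implicit, though polynomial growth plus standard moment bounds only gives it on the infinite horizon when the exponential growth rate of the relevant moments of $\bm X^\pi$ is below $\beta$.
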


The proof follows the same argument as \cite[Lemma~8]{zhao2023policy}, with the addition of $\partial_s \varphi$ to account for the time inhomogeneity and a modified generator $\mathcal{L}^\pi$ to incorporate the jump terms. For brevity, the detailed proof is omitted. Since Lemma~\ref{lemma:gen_property_shift-final} holds for any $\varphi \in C^{1,2}$, we may replace $\varphi$ by $\varphi=J(\cdot,\cdot;\hat\pi)$ that depends on a different stochastic control $\hat\pi$, whenever it is  regular enough. Therefore, quantities defined under $\hat \pi$ can be represented using the generator $\mathcal L^\pi$
 while expectations are taken along trajectories induced by $\pi$. This device is used below to compare $J(\cdot;\pi)$ and $J(\cdot;\hat\pi)$.

\begin{theorem}[Policy gradient]
\label{thm:PG}
Let $\pi$ and $\hat{\pi}$ be two stochastic policies, and let $\mu^{\hat\pi,t,\bm x}$ be the discounted occupation measure induced by $\hat\pi$ starting from $(t,\bm x)$.
Let $J(t,\bm x; \pi)$ be the value function under $\pi$, and let $q(t,\bm x,\bm u;\pi)$ be the corresponding time-inhomogeneous $q$-function defined in \eqref{eq:q_function}. Then
\begin{equation}
\label{eq:perf_diff}
J(t,\bm x;\hat\pi)-J(t,\bm x;\pi)
=
\frac{1}{\beta}\,
\mathbb E_{(s,\bm X_s^{\hat\pi})\sim \beta\mu^{\hat\pi,t,\bm x},\,
\bm u\sim\hat\pi(\cdot\mid s,\bm X_s^{\hat\pi})}
\Big[
q(s,\bm X_s^{\hat\pi},\bm u;\pi)-\gamma\log\hat\pi(\bm u\mid s,\bm X_s^{\hat\pi})
\Big].
\end{equation}

Now let $\{\pi_\theta(\bm u\mid t,\bm x)\}_{\theta\in\Theta}$ be a family of parameterized stochastic policies, and fix $\theta_0\in\Theta$. For each $\theta$, let $\mu^{\theta,t,\bm x}$ denote the discounted occupation measure of $(\bm X_s^{\pi_\theta})_{s\ge t}$ under $\pi_\theta$ with $\bm X_t=\bm x$, and let $q(\cdot;\pi_\theta)$ be the associated $q$-function.
With the baseline policy $\pi=\pi_{\theta_0}$ and the comparison policy $\hat\pi=\pi_\theta$, differentiating the identity $J(t,\bm x;\pi_\theta)-J(t,\bm x;\pi_{\theta_0})$ by \eqref{eq:perf_diff} with respect to $\theta$ at $\theta=\theta_0$, we obtain
\begin{equation}
\label{eq:grad_PG}
\nabla_\theta J(t,\bm x;\pi_\theta)\big|_{\theta=\theta_0}
=
\frac{1}{\beta}\,
\mathbb{E}_{(s,\bm y)\sim \beta\mu^{\theta_0,t,\bm x},\;
\bm u\sim \pi_{\theta_0}(\cdot\mid s,\bm y)}
\Big[
\nabla_\theta \log \pi_\theta(\bm u\mid s,\bm y)\big|_{\theta=\theta_0}\,
A_{\mathrm{ent}}(s,\bm y,\bm u;\theta_0)
\Big],
\end{equation}
where the exploratory advantage is defined by
\begin{equation}
\label{eq:A_ent}
A_{\mathrm{ent}}(s,\bm y,\bm u;\theta_0)
:=
q(s,\bm y,\bm u;\pi_{\theta_0})
-\gamma\log \pi_{\theta_0}(\bm u\mid s,\bm y).
\end{equation}
\end{theorem}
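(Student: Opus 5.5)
The plan is to prove the performance-difference identity \eqref{eq:perf_diff} by applying Lemma~\ref{lemma:gen_property_shift-final} along trajectories of $\hat\pi$ with the test function $\varphi=J(\cdot,\cdot;\pi)$. We take $J(\cdot;\pi)$ to be regular enough, i.e.\ $C^{1,2}$ and bounded, or approximable by such functions. Lemma~\ref{lemma:gen_property_shift-final} under $\hat\pi$ then gives
\[
J(t,\bm x;\pi)=\mathbb E\Big[\int_t^\infty e^{-\beta(s-t)}\big(-\partial_s J(\cdot;\pi)-\mathcal L^{\hat\pi}J(\cdot;\pi)+\beta J(\cdot;\pi)\big)(s,\bm X_s^{\hat\pi})\,\mathrm ds\Big].
\]
In parallel, the definition \eqref{eq:Jgamma-shift} of $J(t,\bm x;\hat\pi)$ reads $\mathbb E[\int_t^\infty e^{-\beta(s-t)}\tilde f(s,\bm X_s^{\hat\pi};\hat\pi)\,\mathrm ds]$. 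Subtracting the two expressions leaves a single discounted integral along $\hat\pi$-trajectories, with integrand
\[
\tilde f(s,\bm y;\hat\pi)+\partial_sJ(s,\bm y;\pi)+\mathcal L^{\hat\pi}J(s,\bm y;\pi)-\beta J(s,\bm y;\pi).
\]

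The next step is to identify this integrand with a $\hat\pi$-average of $q$. By \eqref{eq:gen-pi}, $\mathcal L^{\hat\pi}J=\int_{\mathcal A}\mathcal L^{\bm u}J\,\hat\pi(\bm u\mid s,\bm y)\,\mathrm d\bm u$. By \eqref{eq:tilde-f-def}, $\tilde f(s,\bm y;\hat\pi)=\int_{\mathcal A}(f-\gamma\log\hat\pi)\,\hat\pi\,\mathrm d\bm u$. Since $\hat\pi(\cdot\mid s,\bm y)$ is a probability density, the terms $\partial_sJ-\beta J$ can be pulled inside the $\bm u$-integral. The Hamiltonian \eqref{eq:Hamiltonian-general} is exactly $\mathcal L^{\bm u}J+f$, so the integrand becomes
\[
\int_{\mathcal A}\big(q(s,\bm y,\bm u;\pi)-\gamma\log\hat\pi(\bm u\mid s,\bm y)\big)\hat\pi(\bm u\mid s,\bm y)\,\mathrm d\bm u,
\]
using \eqref{eq:q_function}. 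Lemma~\ref{lem:occupation-identity-inhom-shift} then converts the trajectory expectation into an integral against $\mu^{\hat\pi,t,\bm x}$. Since $\beta\mu^{\hat\pi,t,\bm x}$ is a probability measure (its total mass is $1$ by Definition~\ref{def:beta_discount-inhom-shift}), writing $\mu=\beta^{-1}(\beta\mu)$ yields \eqref{eq:perf_diff}.

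For \eqref{eq:grad_PG}, set $\pi=\pi_{\theta_0}$ and $\hat\pi=\pi_\theta$ in \eqref{eq:perf_diff}, and define
\[
F(\theta):=\int\!\!\int\big(q(s,\bm y,\bm u;\pi_{\theta_0})-\gamma\log\pi_\theta(\bm u\mid s,\bm y)\big)\pi_\theta(\bm u\mid s,\bm y)\,\mathrm d\bm u\,\mu^{\theta,t,\bm x}(\mathrm ds,\mathrm d\bm y).
\]
Then $J(\theta)-J(\theta_0)=F(\theta)$, so $\nabla_\theta J|_{\theta_0}=\nabla_\theta F|_{\theta_0}$. Differentiating by the product rule produces three contributions.

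\textbf{(a) Derivative of the occupation measure.} This term is $\int g_{\theta_0}\,\mathrm d(\nabla_\theta\mu^\theta)$ with $g_{\theta_0}(s,\bm y)=\int(q-\gamma\log\pi_{\theta_0})\pi_{\theta_0}\,\mathrm d\bm u$. But $g_{\theta_0}$ is the integrand obtained above with $\hat\pi=\pi$, so $g_{\theta_0}=\tilde f+\partial_sJ+\mathcal L^{\pi}J-\beta J\equiv0$ by the PIDE \eqref{eq:policy-eval-PIDE}. It therefore vanishes identically in $(s,\bm y)$, for every measure. More cleanly, $F(\theta)=\int g_\theta\,\mathrm d\mu^\theta$ with $g_{\theta_0}\equiv0$, so the measure-derivative term drops out; only the differentiability of $\theta\mapsto\mu^\theta$ in a weak sense, with finite total mass, is needed.

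\textbf{(b) Derivative of the density factor.} Differentiating $\pi_\theta$ gives $\int(q-\gamma\log\pi_{\theta_0})\nabla_\theta\pi_\theta\,\mathrm d\bm u=\mathbb E_{\bm u\sim\pi_{\theta_0}}[\nabla\log\pi_\theta\,A_{\mathrm{ent}}]$, which is the claimed term.

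\textbf{(c) Derivative of the entropy log.} This gives $-\gamma\int\nabla_\theta\pi_\theta\,\mathrm d\bm u=-\gamma\nabla_\theta\int\pi_\theta\,\mathrm d\bm u=0$.

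Assembling (a)--(c) and renormalizing by $\beta$ gives \eqref{eq:grad_PG}.

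The main obstacle is justification rather than algebra. Lemma~\ref{lemma:gen_property_shift-final} requires a bounded $C^{1,2}$ test function, whereas $J(\cdot;\pi)$ may grow, as in the LQ and Merton examples. I would first try a localization argument: apply Dynkin's formula up to exit times of balls and a finite horizon $T$, then pass to the limit using discounting, growth bounds, and integrability of $\tilde f$. The second technical point is interchanging $\nabla_\theta$ with the integrals against $\mu^\theta$ and $\pi_\theta$. I would impose dominated-convergence conditions on $\nabla_\theta\pi_\theta$ and $\nabla_\theta\log\pi_\theta$. The vanishing of $g_{\theta_0}$ is what avoids any need for an explicit derivative of the occupation measure.
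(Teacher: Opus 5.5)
Your proposal is correct and follows essentially the same route as the paper: Lemma~\ref{lemma:gen_property_shift-final} is applied to $J(\cdot;\pi)$ along $\hat\pi$-trajectories and combined with the occupation identity. Then $\int\upsilon(\varepsilon;\cdot)\,\mathrm d\mu^{\theta_0+\varepsilon h}$ is differentiated, with $\upsilon(0;\cdot)\equiv0$ (from the PIDE) removing the occupation-measure derivative and the score-function identity removing the entropy term, exactly as in \eqref{eq:pdiff_eps2}--\eqref{eq:upsilon_deriv_final}.

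The only difference is a small streamlining in the first part. You identify $\tilde f(\cdot;\hat\pi)+\partial_sJ+\mathcal L^{\hat\pi}J-\beta J$ directly with $\int_{\mathcal A}(q-\gamma\log\hat\pi)\hat\pi\,\mathrm d\bm u$ from the definition of $q$. The paper instead passes through the integrated PIDE \eqref{eq:eval-integrated-hatpi} and Lemma~\ref{lemma:diff_two_policies}, whose two uses of the PIDE cancel. Your version therefore makes clear that the PIDE is only needed for $\upsilon(0)\equiv0$ in the gradient step.
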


The proof of the policy-gradient formula \eqref{eq:grad_PG} is given in Section~\ref{appen:proof_of_thm2} in the supplementary materials.
This argument extends \cite[Theorem~3]{zhao2023policy}. In particular, when $\gamma = 0$, \eqref{eq:grad_PG} reduces to the classical policy-gradient formula \cite{sutton1999policy,montenegro2024learning}.

Once Theorem~\ref{thm:PG} is established, we obtain an explicit representation of the policy gradient and can, in principle, learn the optimal policy via reinforcement learning.
However, the $q$-function is primarily a formal object: evaluating it may require derivatives such as $\nabla_{\bm x}J$ and $\nabla_{\bm x}^2J$, which is computationally expensive and numerically delicate.
Moreover, in model-free settings the SDE coefficients $(\bm b,\bm\sigma,\bm\alpha)$ are unknown, so the Hamiltonian terms cannot be computed directly. Therefore, practical implementations must rely on tractable approximations of $q$.
Two main approaches have been explored. First,  \cite{jia2023q,bo2024continuous}  approximate $q$ by neural networks and learn it via martingale properties, then update the policy via the Gibbs form implied by the $q$-function. Second,  \cite{zhao2023policy} relates $q$-function with $J$ without requiring derivatives, yielding an estimator akin to the generalized advantage estimation (GAE) \cite{tallec2019making}.
Our approach follows the latter and leads to the next result under our setting with proof presented in Section~\ref{appen:derivation_gae} in the supplementary materials.

\begin{lemma}[Approximation of $q$-function]
\label{lem:gae-approximation}
Fix $\beta>0$ and a stochastic policy $\pi$. Let $J(\cdot,\cdot;\pi)$ denote the corresponding discounted reward.
Assume $J \in C^{1,2}([0,\infty)\times\mathbb R^d)$ and that there exists $\delta_0>0$ such that, for every $t\ge 0$ and every
$\delta_t\in(0,\delta_0]$, $\mathbb E\Big[
  \sup_{s\in[t,t+\delta_t)}
  \big(
    |J(s,\bm X_s^{\bm u};\pi)|
    + |\frac{\partial J}{\partial t}(s,\bm X_s^{\bm u};\pi)|
    + |\nabla_x J(s,\bm X_s^{\bm u};\pi)|
    + \|\nabla_x^2 J(s,\bm X_s^{\bm u};\pi)\|
  \big)
  \,\big|\,\\\mathcal F_t
\Big] < \infty,$
where $(\bm X^{\bm u}_s)_{s\geq t}$ is defined before \eqref{eq:Q_delta} with $\bm X_t^{\bm u} = \bm x$.
Define the quantity
\begin{equation}
\label{eq:gae_def_J}
\tilde q_{\delta_t}(t,\bm X_t^{\bm u}, \bm u;\pi)
:= \frac{1}{\delta_t}\big[
   f(t,\bm X_t^{\bm u},\bm u)\,\delta_t
   + e^{-\beta \delta_t}\,J\bigl(t+\delta_t, \bm X_{t+\delta_t}^{\bm u};\pi\bigr)
   - J\bigl(t, \bm X_{t}^{\bm u};\pi\bigr)
   \big].
\end{equation}
Then, as $\delta_t\to 0$,
\begin{equation}
\label{eq:gae-first-order_J}
\mathbb E\!\left[\,\tilde q_{\delta_t}(t,  \bm x, \bm u;\pi)\,\big|\,\mathcal F_t\right]
= q\bigl(t,\bm x, \bm u;\pi\bigr) + o(1).
\end{equation}
Therefore, $\tilde q_{\delta_t}(t, \bm x, \bm u;\pi)$ is a first-order asymptotically unbiased estimator of the $q$-function.
\end{lemma}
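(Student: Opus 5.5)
The plan is to apply It\^o's formula for jump-diffusions to the discounted process $s\mapsto e^{-\beta(s-t)}J(s,\bm X_s^{\bm u};\pi)$ on $[t,t+\delta_t]$. On this interval the state follows the strict-control SDE \eqref{eq:controlled-SDE-levy_pi} with the constant action $\bm u$, so its generator is $\mathcal L^{\bm u}$ from \eqref{eq:generator-general-levy}. Since $J\in C^{1,2}$, we get
\begin{equation*}
e^{-\beta\delta_t}J(t+\delta_t,\bm X_{t+\delta_t}^{\bm u};\pi)-J(t,\bm x;\pi)
=\int_t^{t+\delta_t} e^{-\beta(s-t)}\big(\partial_s J+\mathcal L^{\bm u}J-\beta J\big)(s,\bm X_{s-}^{\bm u};\pi)\,\mathrm ds+M_{t,t+\delta_t},
\end{equation*}
where $M_{t,t+\delta_t}$ collects the Brownian stochastic integral and the compensated-Poisson integral, exactly as in \eqref{eq:mart_inc_J_corrected} with $\pi$ replaced by the fixed action $\bm u$.

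First I will show that $\mathbb E[M_{t,t+\delta_t}\mid\mathcal F_t]=0$. If the integrands are only locally square integrable, I will localize with stopping times $\tau_n\uparrow\infty$. I will then remove the localization by dominated convergence, using the assumed integrability of $\sup_{s\in[t,t+\delta_t)}(|J|+|\partial_tJ|+|\nabla_xJ|+\|\nabla_x^2J\|)$ together with the linear growth of $(\bm b,\bm\sigma,\bm\alpha)$ and standard moment bounds for $\bm X^{\bm u}$.

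Next I divide by $\delta_t$ and add the reward term. Note that $\mathcal L^{\bm u}J+f=\mathscr H(\cdot,\bm u,\nabla_xJ,\nabla_x^2J)$ by \eqref{eq:Hamiltonian-general}, and the reward in \eqref{eq:gae_def_J} is frozen at time $t$ while the drift integral is not. Writing $g(s,\bm y):=\partial_sJ+\mathcal L^{\bm u}J-\beta J$ evaluated at $(s,\bm y)$, this gives
\begin{equation*}
\mathbb E[\tilde q_{\delta_t}\mid\mathcal F_t]-q(t,\bm x,\bm u;\pi)
=\mathbb E\Big[\frac{1}{\delta_t}\int_t^{t+\delta_t}\big(e^{-\beta(s-t)}g(s,\bm X_{s-}^{\bm u})-g(t,\bm x)\big)\mathrm ds\,\Big|\,\mathcal F_t\Big].
\end{equation*}
By continuity of $\partial_tJ,\nabla_xJ,\nabla_x^2J$ and right-continuity of paths at $t$ (with $\bm X_t^{\bm u}=\bm x$), the integrand tends to $0$ pathwise as $s\downarrow t$. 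Hence the time average tends to $0$ almost surely.

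The main obstacle is passing this limit through the expectation, especially for the nonlocal jump term. Here I would bound $|g(s,\bm X_{s-}^{\bm u})|$ by a constant times $(1+|\bm X_{s-}^{\bm u}|^2)\,\sup_{r\in[t,t+\delta_0)}(|J|+|\partial_tJ|+|\nabla_xJ|+\|\nabla_x^2J\|)$. For the jump integrand I would use Taylor's formula on $\{|\bm\alpha|\le1\}$, giving a bound by $\|\nabla_x^2J\|\,|\bm\alpha|^2$, and the crude bound $2\sup|J|+|\nabla_xJ|\,|\bm\alpha|$ on $\{|\bm\alpha|>1\}$. Both are integrable against $\nu$ under the L\'evy integrability condition together with the growth of $\bm\alpha$. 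If this product bound is not directly integrable, I would strengthen the reading of the hypothesis so that it controls the Hamiltonian terms, which is the intended meaning. Dominated convergence then yields the $o(1)$ in \eqref{eq:gae-first-order_J}, and this is exactly the first-order expansion that underlies \eqref{eq:Qderive}.
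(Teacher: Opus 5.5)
Your proposal is correct and follows essentially the same route as the paper: apply the It\^o--L\'evy formula to $e^{-\beta(s-t)}J(s,\bm X_s^{\bm u};\pi)$ under the frozen action $\bm u$, then remove the martingale increment using the integrability hypothesis, then use continuity and dominated convergence on the averaged drift $\partial_sJ+\mathcal L^{\bm u}J-\beta J$, and finally add the $\mathcal F_t$-measurable reward $f(t,\bm x,\bm u)$ to recover $q$. You are more explicit than the paper (localization, splitting the nonlocal term by the size of $\bm\alpha$), and you correctly flag two things the paper's one-line ``dominated convergence'' step tacitly relies on: a strengthened integrability reading, since the stated hypothesis does not dominate products like $|\bm b|\,|\nabla_xJ|$ or off-path values $J(s,\bm X_{s-}+\bm\alpha)$, and continuity of the coefficients in $(t,\bm x)$, which the paper invokes and you use implicitly.
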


Therefore, when updating the actor using  \eqref{eq:grad_PG}--\eqref{eq:A_ent}, we replace $q$ by $\tilde q_{\delta_t}$, and correspondingly approximate $A_{\mathrm{ent}}(s,\bm y,\bm u;\theta_0)$ by $\tilde q_{\delta_t}(s,\bm y,\bm u;\pi_{\theta_0})-\gamma\log\pi_{\theta_0}(\bm u\mid s,\bm y)$.

\begin{remark}
  In particular, for the reward functionals considered here, the quantities $J$, $\partial_t J$, $\nabla_x J$, and $\nabla_x^2 J$ are polynomially bounded (or bounded) in $x$. Together with standard moment estimates for jump-diffusions under local Lipschitz and linear-growth conditions on $(b,\sigma,\lambda,\alpha)$, the condition of $J$ in Lemma \ref{lem:gae-approximation} holds, see
  e.g., in \cite[Proposition~2]{gao2024reinforcement} and \cite[Theorem~1.19]{oksendal2007applied}.
 
\end{remark}

\subsection{The Online Actor-Critic Scheme}
\label{sec:numerical_implementation}
We describe the proposed online actor-critic scheme for time-inhomogeneous jump diffusion control problems.

Fix a deterministic step size $\delta_t$ and consider the uniform grid
$0=t_0<t_1<\cdots<t_K$ with $t_k=k\,\delta_t$. We parameterize the actor and critic by neural networks, denoted by $\pi_\theta(\bm u \mid t, \bm x)$ and $V_\psi(t, \bm x)$, and update $(\theta, \psi)$ iteratively via policy gradient and policy evaluation. At each iteration, a minibatch of $L$ trajectories is sampled, which we denote by $\{\bm X_k^{(\ell)}\}_{k=0}^K$, $\ell = 1, \ldots, L$. Specifically, at time $t_k$, we sample an action $\bm u_k^{(\ell)}$ from the current policy $\pi_{\theta}(\cdot \mid t_k, \bm X_k^{(\ell)})$, evolve the dynamics using the Euler scheme of \eqref{eq:explor_SDE_PO},  and obtain the next state $\bm X_{k+1}^{(\ell)}$. The discounted reward $f_{k,\ell}$  accumulated over $[t_k, t_{k+1}]$ is approximated by $f_{k,\ell} :=  f(t_k,\bm X_k^{(\ell)},\bm u_k^{(\ell)}) \,\delta_t$.

\textbf{Critic.} To update the critic parameters $\psi$ in $V_\psi(t, \bm x)$, we construct the TD error \eqref{eq:ct_TD_error_J} (or the martingale-corrected TD error \eqref{eq:ct_martingale_corrected_TD}) along sampled trajectories using $\bar V_{\bar\psi}$ for the bootstrapped next-state target. The target network $\bar V_{\bar\psi}$ decouples the bootstrapping target from the online critic being optimized, thereby reducing target drift and improving stability during training, following the target-network idea used in soft actor-critic (SAC) \cite{haarnoja2018soft}. Specifically, for trajectory $\ell$ at time $t_k$, define
\begin{align}
\delta_{\mathrm{TD},k}^{(\ell)}
&:=\; f_{k,\ell}
+e^{-\beta\delta_t}\,\bar V_{\bar\psi}\!\bigl(t_{k+1},\bm X_{k+1}^{(\ell)}\bigr)
- V_\psi\!\bigl(t_k,\bm X_k^{(\ell)}\bigr),
\label{eq:TD_imple}\\
\tilde\delta_{\mathrm{TD},k}^{(\ell)}
&:=\; \delta_{\mathrm{TD},k}^{(\ell)}
-\big( \nabla_x V_\psi\!\bigl(t_k,\bm X_k^{(\ell)}\bigr)^\top
\bm\sigma\!\bigl(t_k,\bm X_k^{(\ell)},\bm u_k^{(\ell)}\bigr)\big)
\Delta\bm W_k^{(\ell)}
\label{eq:martingale_corrected_TD-discrete_impl}\\
&\quad -\Big(
\sum_{i=N_{t_k}^{(\ell)}+1}^{N_{t_{k+1}}^{(\ell)}}
\big[
V_\psi\!\bigl(t_k,\bm X_k^{(\ell)}+\bm\alpha(t_k,\bm X_k^{(\ell)},\bm u_k^{(\ell)},\bm z_i^{(\ell)})\bigr)
- V_\psi\!\bigl(t_k,\bm X_k^{(\ell)}\bigr)
\big]
-\delta_t V_{\text{non}}\!\bigl(t_k,\bm X_k^{(\ell)},\bm u_k^{(\ell)}\bigr)
\Big). \notag
\end{align}
where $\Delta \bm W_k^{(\ell)}:=\bm W_{t_{k+1}}^{(\ell)}-\bm W_{t_k}^{(\ell)}\sim \mathcal{N}(0,\delta_t \bm I_d)$,
 $N_{t_k}^{(\ell)}$ counts the number of jumps on the trajectory $\ell$,
$\{\bm z_i^{(\ell)}\}_{i\ge1}$ are the corresponding jump sizes, and $V_{\text{non}}(t, \bm{x}, \bm{u})$ approximates the (non-local) compensator term $\int_{\mathbb R^d}
\big[
V_\psi\!\bigl(t, \bm{x}+\bm\alpha(t,\bm{x}, \bm{u},\bm z)\bigr)-V_\psi(t,\bm{x})
\big]\nu(\mathrm d\bm z)$. A practical challenge is the efficient evaluation of this term; see  \cite[Section 3.1]{lu2025multiagent} for further discussion.

Given a minibatch of $L$ trajectories and $K_{\mathrm{critic}}$ time steps, we update $\psi$ by minimizing the empirical mean-squared (martingale-corrected) TD error
\begin{equation}
\label{eq:loss_critic_impl}
L_{\mathrm{critic}}(\psi)
=  \frac{1}{L K_{\mathrm{critic}}}
  \sum_{\ell=1}^L \sum_{k=0}^{K_{\mathrm{critic}}-1}
  (\tilde\delta_{\text{TD}, k}^{(\ell)})^2\,.
\end{equation}
The target critic parameters are then updated by Polyak averaging $\bar\psi \leftarrow \rho_c\,\bar\psi + (1-\rho_c)\,\psi,
\, \rho_c\in(0,1)$, where $\rho_c$ is a prescribed averaging weight \cite{haarnoja2018soft}.

\textbf{Actor.} We parameterize the stochastic policy $\pi_\theta$ as a conditional normalizing flow \cite{brahmanage2023flowpg,papamakarios2021normalizing}. Specifically, the policy is defined as the pushforward of a Gaussian distribution $\mathcal{N}_\theta$\footnote{Even in the unregularized case ($\gamma = 0$), where the optimal control is deterministic, we still adopt a Gaussian policy parameterization. This choice is supported by \cite{montenegro2024learning,wang2020reinforcement}, which establish the convergence of policy-gradient methods with stochastic policies and show that, in terms of solvability, they are equivalent to the corresponding standard control problem.}
\begin{equation}
\label{eq:policy_parameterized_impl}
\bm z_0 \sim \mathcal{N}_\theta(\cdot  \mid t,\bm x)
= \mathcal{N}\big(\bar{\bm \mu}_\theta(t,\bm x),\,\mathrm{Std}_{\theta}^2(t,\bm x)\big),
\end{equation}
through a learnable invertible normalizing flow $F_\theta(\cdot;  t, \bm x)$, followed by an optional differentiable squashing map $S$ (e.g., a \texttt{sigmoid} or \texttt{tanh}) that enforces control constraints when needed. The log-density of the resulting control sample is computed exactly by the change-of-variables formula,
\begin{equation}
\begin{aligned}
\label{eq:logpi_flow_general}
\log \pi_\theta(\bm u\mid t, \bm x) =
\log p_{{\mathcal N}_{\theta}}\!\bigl(\bm z_0 \mid \bar{\bm\mu}_\theta(t, \bm x), \mathrm{Std}_\theta(t,\bm x)\bigr)
-
 \log \bigl|\det J_{F}(\bm z_0; t, \bm x)\bigr| -
\log \bigl|\det J_{S}(F_\theta(\bm z_0))\bigr|,
\end{aligned}
\end{equation}
where $J_{F}$ and $J_S$ denote the Jacobians of the flow $F$ and the squashing map $S$, respectively. This construction defines a flexible stochastic policy class that includes Gaussian policies as a special case\footnote{When the Hamiltonian $\mathscr{H}$ is quadratic in $\bm u$, the optimal randomized policy $\pi^*(\bm u\mid t,\bm x)\propto \exp(\mathscr{H} (t,\bm x,\bm u)/\gamma)$ is Gaussian; if the control domain is unconstrained, we may also omit the squashing map, so that only the first term in \eqref{eq:logpi_flow_general} remains.}, while retaining exact likelihood evaluation required for entropy regularization and policy-gradient optimization.

In implementation, samples of $\pi_\theta(\cdot \mid t_k, \bm X_k^{(\ell)})$ are generated using
\begin{align}
    \label{eq:flow_transform}\bm z_{0,k}^{(\ell)}
&= \bar{\bm \mu}_\theta\big(t_k, \bm X_k^{(\ell)}\big) + \mathrm{Std}_{\theta}\big(t_k, \bm X_k^{(\ell)}\big)\odot  \bm \varepsilon_k^{(\ell)},\,
\bm\varepsilon_k^{(\ell)} \sim \mathcal{N}(\bm 0,\bm I_m), \\
    \bm z_{F,k}^{(\ell)} &= F_\theta \bigl(\bm z_{0,k}^{(\ell)} ; t_k, \bm X_k^{(\ell)}\bigr),\quad \bm u_k^{(\ell)} = S(\bm z_{F,k}^{(\ell)})\in [\bm u_{\min}, \bm u_{\max}]^m. \nonumber
\end{align}
To update the actor parameters $\theta$, we form the one-step advantage estimator using \eqref{eq:A_ent} and Lemma~\ref{lem:gae-approximation}:
\begin{equation}
\label{eq:adv_rate_estimator_impl}
\hat A_k^{(\ell)}
:= \frac{1}{\delta_t}
   \Big(
     f_{k,\ell}
     + e^{-\beta \delta_t}\,V_\psi\bigl(t_{k+1}, \bm X_{k+1}^{(\ell)}\bigr)
     - V_\psi\bigl(t_k, \bm X_k^{(\ell)}\bigr)
   \Big) - \gamma\,\log\pi_\theta^{(\ell)}(t_k),
\end{equation}
where $\log\pi_\theta^{(\ell)}(t_k) := \log \pi_\theta(\bm u_k^{(\ell)} \mid t_k, \bm X_{k}^{(\ell)})$ using \eqref{eq:logpi_flow_general}.
Given a minibatch of $L$ trajectories and $K_{\mathrm{actor}}$ time steps, we update $\theta$ by minimizing the policy-gradient surrogate
\begin{equation}
\label{eq:actor_loss_impl}
L_{\mathrm{actor}}(\theta)
= -\,\frac{1}{\beta L K_{\mathrm{actor}}}
  \sum_{\ell=1}^L \sum_{k=0}^{K_{\mathrm{actor}}-1}
  \log\pi_\theta^{(\ell)}(t_k)\,
  \mathrm{stopgrad}\!\bigl(\hat A_k^{(\ell)}\bigr),
\end{equation}
where $\mathrm{stopgrad}(\cdot)$ indicates that $\hat A_k^{(\ell)}$ is treated as constant when differentiating with respect to $\theta$, consistent with Theorem~\ref{thm:PG}.

Combining \eqref{eq:loss_critic_impl} and \eqref{eq:actor_loss_impl} yields the online time-inhomogeneous actor-critic procedure summarized in Alg.~\ref{alg:ours}. We note that the underlying control problem is infinite-horizon: the finite sum over $k$ corresponds to an optimization window used for stability and variance reduction, and does not impose a finite terminal time.

{\small
\begin{algorithm}[t]
\caption{Online time-inhomogeneous actor-critic for infinite-horizon jump-diffusion}
\label{alg:ours}
\begin{algorithmic}[1]
\Require Step size $\delta_t$; discount $\beta$; entropy weight $\gamma$; number of time points $K$; update periods $K_{\mathrm{critic}}, K_{\mathrm{actor}}$; iterations $N_{\mathrm{itr}}$; minibatch $L$;
value net $V_\psi(t,\bm x)$; target net $\bar V_{\bar\psi}(t,\bm x)$; Polyak coefficient $\rho_c\in(0,1)$; stochastic policy $\pi_\theta(\cdot\mid t,\bm x)$.
\For{$\mathrm{it}=1$ to $N_{\mathrm{itr}}$}
  \State Set initial time $t \gets 0$ and initial states $\bm X_0 \gets \bm X_{\mathrm{init}}$.
  \State $V \gets V_\psi(t,\bm X)$.
  \State $\mathcal{L}_{\mathrm{critic}}\gets 0,\; n_{\mathrm{critic}}\gets 0;\quad
         \mathcal{L}_{\mathrm{actor}}\gets 0,\; n_{\mathrm{actor}}\gets 0.$
  \For{$k=0$ to $K-1$}
\State Sample $\bm u_k^{(\ell)}$ according to  \eqref{eq:flow_transform}.

    \State Calculate the log-density $\log\pi_\theta^{(\ell)}(t_k)$ via \eqref{eq:logpi_flow_general}.

    \State Evolve $\bm X_{k+1}^{(\ell)}$ from $\bm X_k^{(\ell)}$ via Euler scheme.

    \State Compute the TD error $\delta_{\mathrm{TD},k}^{(\ell)}$ using \eqref{eq:TD_imple} or \eqref{eq:martingale_corrected_TD-discrete_impl}.

    \State $\mathcal{L}_{\mathrm{critic}} \gets \mathcal{L}_{\mathrm{critic}} + \sum_{\ell = 1}^L \|\delta_{\text{TD},k}^{(\ell)}\|_2^2$; \quad $n_{\mathrm{critic}} \gets n_{\mathrm{critic}} + 1$.

    \If{$n_{\mathrm{critic}} \bmod K_{\mathrm{critic}} = 0$}
      \State Update $\psi$ by one optimizer step on $\mathcal{L}_{\mathrm{critic}}/LK_{\mathrm{critic}}$. \Comment{critic objective \eqref{eq:loss_critic_impl}}
      \State $\bar\psi \gets \rho_c\,\bar\psi + (1-\rho_c)\,\psi$.
      \State $\mathcal{L}_{\mathrm{critic}}\gets 0$; \quad $n_{\mathrm{critic}}\gets 0$.
    \EndIf

\State Compute the GAE estimator $\hat A_k^{(\ell)}$ according to \eqref{eq:adv_rate_estimator_impl}

    \State $\mathcal{L}_{\mathrm{actor}} \gets \mathcal{L}_{\mathrm{actor}} - \beta^{-1}\sum_{\ell=1}^L \log\pi_\theta^{(\ell)}(t_k) \; \mathrm{stopgrad}(\hat A_k^{(\ell)})$;  \,\, $n_{\mathrm{actor}} \gets n_{\mathrm{actor}} + 1$.

    \If{$n_{\mathrm{actor}} \bmod K_{\mathrm{actor}} = 0$}
      \State Update $\theta$ by one optimizer step on $\mathcal{L}_{\mathrm{actor}}/LK_{\mathrm{actor}}$.\Comment{actor objective \eqref{eq:actor_loss_impl}}
      \State $\mathcal{L}_{\mathrm{actor}}\gets 0$; \quad $n_{\mathrm{actor}}\gets 0$.
    \EndIf

  \EndFor
\EndFor
\end{algorithmic}
\end{algorithm}
}

\section{Numerical Experiments}
\label{sec:analytical_examples}
In this section, we illustrate the proposed online actor-critic framework through a set of representative numerical examples.
We consider three problems of increasing complexity: a linear-quadratic (LQ) control problem with jump diffusion (Section \ref{sec:lq_unified}), the Merton portfolio optimization problem (Section \ref{sec:merton}), and a multi-agent portfolio game (Section \ref{sec:multi-agent}).
These examples are chosen to demonstrate the flexibility of the method across settings with known analytical structure, nonlinear dynamics, and strategic interactions, as well as to assess its empirical stability and performance in time-inhomogeneous jump-diffusion environments.

\smallskip
\noindent\textbf{Metrics.}
We assess the learned actor and critic networks using trajectory-level metrics. For each experiment, we analyze the learned (i) state trajectory $\hat{\bm X}_t$, (ii) value function $\hat V(t,\hat{\bm X}_t)$, and (iii) control process (the feedback control $\hat{\bm u}(t,\hat{\bm X}_t)$ when $\gamma = 0$, or the mean of $\hat{\bm u}_t \sim \hat\pi(\cdot\mid t,\hat{\bm X}_t)$ in the exploratory case $\gamma>0$).

When a benchmark solution $(\bm u^*,V,\bm X)$ is available, we report time-averaged relative mean-square errors (RMSEs) on $[0,T_{\mathrm{eval}}]$:
\begin{equation}
\label{eq:error_X_V_rel}
\mathcal{E}_X(T_{\mathrm{eval}})
:=
\frac{\int_0^{T_{\mathrm{eval}}}\!\|\hat{\bm X}_t-\bm X_t\|^2\,\mathrm{d}t}
{\int_0^{T_{\mathrm{eval}}}\!\|\bm X_t\|^2\,\mathrm{d}t+\varepsilon_X},
\qquad
\mathcal{E}_V(T_{\mathrm{eval}})
:=
\frac{\int_0^{T_{\mathrm{eval}}}\!|\hat V(t,\hat{\bm X}_t)-V(t,\bm X_t)|^2\,\mathrm{d}t}
{\int_0^{T_{\mathrm{eval}}}\!|V(t,\bm X_t)|^2\,\mathrm{d}t+\varepsilon_V}.
\end{equation}
For the learned control, we use the RMSE when $\gamma=0$, and a distributional discrepancy when $\gamma>0$:
\begin{equation}
\label{eq:error_u_rel}
\mathcal{E}_u(T_{\mathrm{eval}})
:=
\begin{cases}
\dfrac{\int_0^{T_{\mathrm{eval}}}\!
\|\hat{\bm u}(t,\hat{\bm X}_t)-\bm u^*(t,\bm X_t)\|^2\,\mathrm{d}t}
{\int_0^{T_{\mathrm{eval}}}\!\|\bm u^*(t,\bm X_t)\|^2\,\mathrm{d}t+\varepsilon_u},
& \gamma=0,\\[14pt]
\dfrac{1}{T_{\mathrm{eval}}}\displaystyle\int_0^{T_{\mathrm{eval}}}
\mathrm{KL}\big(\pi^*(\cdot\mid t,\hat{\bm X}_t)\,\big\|\,\hat\pi(\cdot\mid t,\hat{\bm X}_t)\big)\,\mathrm{d}t,
& \gamma>0,
\end{cases}
\end{equation}
since for $\gamma>0$, both the benchmark and learned controls are stochastic policies, denoted by $\pi^*(\cdot\mid t,\bm x)$ and $\hat\pi(\cdot\mid t,\bm x)$.
The constants $\varepsilon_X,\varepsilon_V,\varepsilon_u>0$ are small stabilizers included to avoid division by zero.

\paragraph{Poisson jump specification.}
In the experiments, we consider a discrete L\'evy measure corresponding to Poisson jumps: $\nu(\mathrm{d}\bm z)=\sum_{i=1}^{d}\lambda_i\,\delta_{\bm e_i}(\mathrm{d}\bm z),$ where $\lambda_i>0$ and $\bm e_i$ is the $i$-th canonical basis vector in $\mathbb R^d$. Under this specification, the jump measure is represented by independent Poisson processes $N_t^{(i)}$ with rates $\lambda_i$, so that $N(\mathrm{d}t,\bm e_i)=\mathrm{d}N_t^{(i)}$. Let $M_t^{(i)}:=N_t^{(i)}-\lambda_i t$ be the compensated Poisson process, namely $\mathrm{d}M_t^{(i)}=\mathrm{d}N_t^{(i)}-\lambda_i\,\mathrm{d}t$. Then the jump term in \eqref{eq:controlled-SDE-levy_pi} becomes $\int_{\mathbb{R}^{d}} \bm\alpha\bigl(t,\bm X^\pi_{t-},\bm u_t,\bm z\bigr)\,\tilde N(\mathrm{d}t,\mathrm{d}\bm z)=\sum_{i=1}^{d}\bm\alpha\bigl(t,\bm X^\pi_{t-},\bm u_t,\bm e_i\bigr)\,\mathrm{d}M_t^{(i)}.$ Accordingly, the nonlocal integral term in the Hamiltonian \eqref{eq:Hamiltonian-general} and the generator $\mathcal{L}^{\bm u}$ in \eqref{eq:generator-general-levy} reduces to $\sum_{i=1}^{d}\lambda_i\Bigl[V\bigl(t,\bm x+\bm\alpha(t,\bm x,\bm u,\bm e_i)\bigr)-V(t,\bm x)-\bm\alpha(t,\bm x,\bm u,\bm e_i)\cdot \nabla_{\bm x}V(t,\bm x)\Bigr]$.

In each of the following sections, we (i) specify the control or game formulation, (ii) give the analytical solution when available, or present how benchmark solutions are obtained,  and (iii) present the numerical results. All implementation details and model parameters are provided in Appendix~\ref{sec:numericalsupplements}. Some benchmark derivations are standard in the stochastic control literature, and are included in the supplementary materials for completeness. All experiments are implemented in \texttt{PyTorch} and run on an NVIDIA RTX \texttt{4090} GPU. The code is available upon request and will be made public upon publication.

\subsection{Linear-Quadratic Control with Jump Diffusions}
\label{sec:lq_unified}
We first consider a $d$-dimensional state $\bm X_t\in\mathbb R^d$ and a $m$-dimensional control $\bm u_t\in\mathbb R^m$, governed by the controlled jump-diffusion
\begin{equation}
\label{eq:lq_sde_unified}
\mathrm{d}\bm X_t
= \bm B(t)\,\bm u_t\,\mathrm{d} t
  + \bm\Sigma(t)\,\mathrm{d}\bm W_t
  + \sum_{i=1}^d \alpha_i(t)\,\bm e_i\,\mathrm{d}M_t^{(i)},
\quad t\ge 0,
\end{equation}
where $\bm B(t)\in\mathbb R^{d\times m}$, $\bm\Sigma(t)\in\mathbb R^{d\times d}$, $\bm \alpha(t):=(\alpha_1(t),\dots,\alpha_d(t))^\top\in\mathbb R^d$, and $\bm e_i$ is the $i$th canonical basis vector in $\mathbb R^d$.
The running reward is quadratic $f(t,\bm x,\bm u) = -(\bm u^\top \bm R(t)\,\bm u + \bm x^\top \bm Q(t)\,\bm x )$, with $\bm R(t)\in\mathbb S^{m}$ and $\bm Q(t)\in\mathbb S^{d}$  positive definite.

Recall that the value function satisfies \eqref{eq:HJB-hard} for the standard control $(\gamma = 0)$ and \eqref{eq:soft-HJB-opt-general} under entropy-regularization, with the integral term replaced by $\sum_{i=1}^d \lambda_i(t)\big(\varphi(t, \\\bm x +\alpha_i(t)\bm e_i)-\varphi(t,\bm x)-\alpha_i(t)\,\partial_{x_i}\varphi(t,\bm x)\big)$. Therefore, the optimal policy and value function satisfy
\begin{equation}
\label{eq:lq_pi_star_convergent_merged}
\pi^*(\bm u\mid t,\bm x)
=
\mathcal N\!\left(
\bm R(t)^{-1}\bm B(t)^\top \bm H(t)\bm x,\;
\frac{\gamma}{2}\bm R(t)^{-1}
\right), \quad V(t,\bm x)=\bm x^\top \bm H(t)\bm x+g_\gamma(t) \,,
\end{equation}
where $\bm H(t)\in\mathbb S^{d}$ and $g_\gamma(t)\in\mathbb R$ solve
\begin{equation}
\label{eq:lq_riccati_scalar_inhomo}
\begin{aligned}
\bm H'(t)
&=
\beta \bm H(t)+\bm Q(t)-\bm H(t)\bm B(t)\bm R(t)^{-1}\bm B(t)^\top \bm H(t),\\
g_\gamma'(t)
&=
\beta g_\gamma(t)
-\operatorname{Tr}\!\bigl(\bm\Sigma(t)\bm\Sigma(t)^\top \bm H(t)\bigr)
-\operatorname{Tr}\!\Bigl(\bm\Lambda(t)\operatorname{diag}(\bm\alpha(t))\bm H(t)\operatorname{diag}(\bm\alpha(t))\Bigr)
-c_\gamma(t),
\end{aligned}
\end{equation}
with $c_\gamma(t)=\frac{\gamma}{2}\bigl(m\log(\pi\gamma)-\log\det \bm R(t)\bigr), \,\bm\Lambda(t):=\operatorname{diag}(\lambda_i(t)).$ The proof for the analytical solution is presented in supplemental material.
In the classical case $\gamma=0$, by standard derivation, the optimal stochastic policy degenerates to the feedback control
\begin{equation}
\label{eq:lq_u_star_convergent_merged}
\bm u^*(t,\bm x)=\bm R(t)^{-1}\bm B(t)^\top \bm H(t)\bm x.
\end{equation}

\begin{figure}[htbp]
  \centering
  \begin{subfigure}{0.9\textwidth}
    \centering
\includegraphics[width=\linewidth]{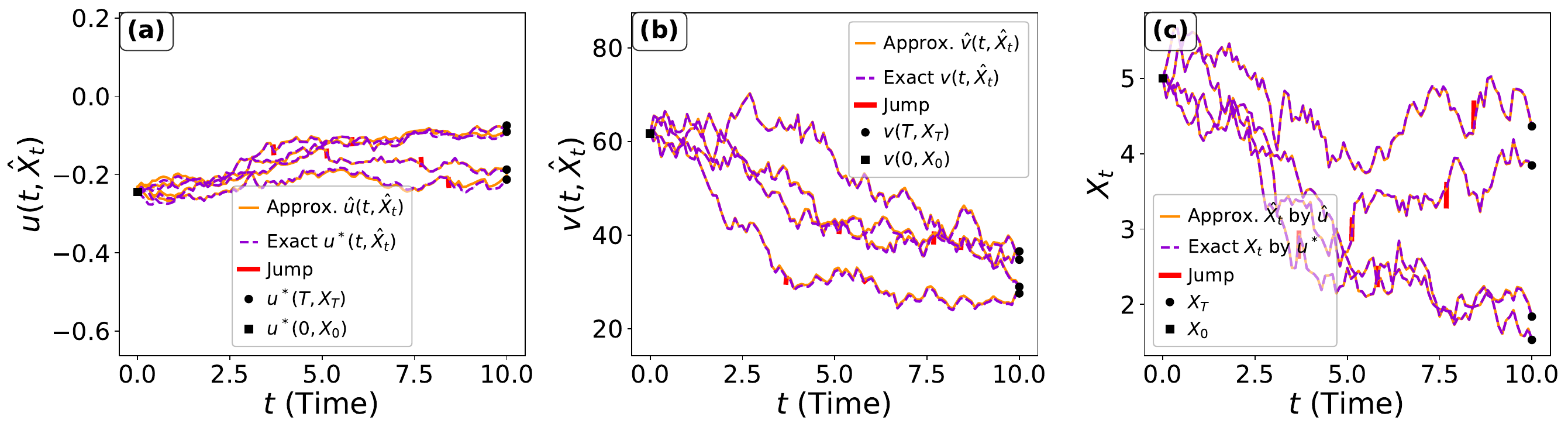}
\includegraphics[width=\linewidth]{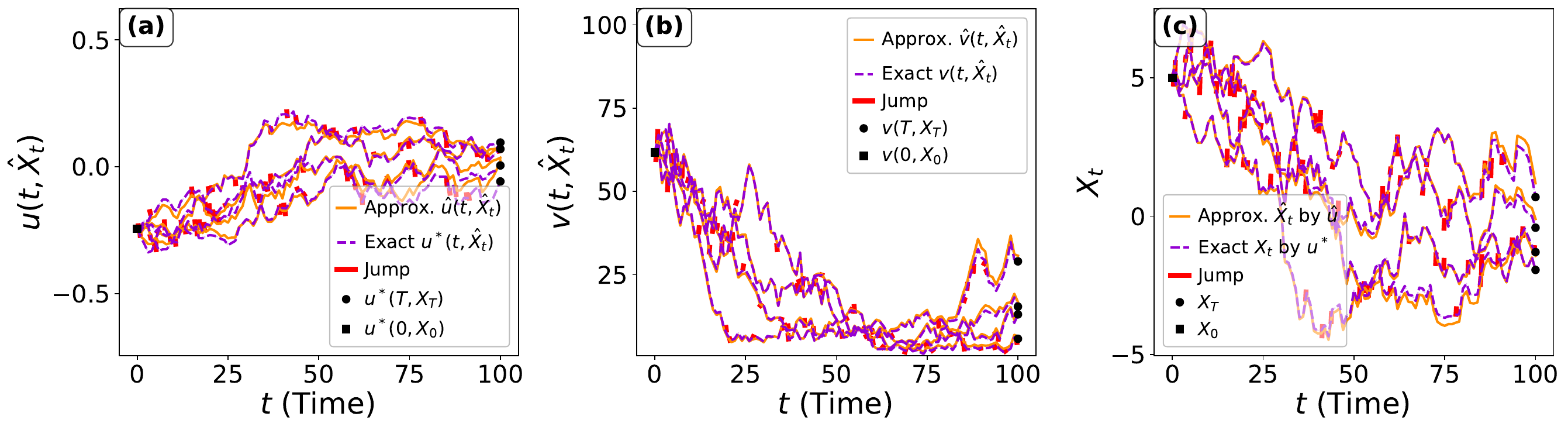}
  \end{subfigure}
  \caption{Predicted (a) optimal control, (b) value function, and (c) state trajectory for the standard LQ problem with $d=5$ on horizon $T=10$ (top) and $T=100$ (bottom). Parameters: $\gamma=0$, $\bm B= 0.5 \bm I_d$, $\bm\Sigma=0.3\bm I_d$, $\bm R = 5\bm I_d$, $\bm Q = 0.5\bm I_d$,
  $\lambda_i = 0.2+\frac{i-1}{d-1}(0.3-0.2)$ and $\alpha_i = 0.3-\frac{i-1}{d-1}(0.3-0.2)$ for $i=1,\ldots,d$.
  }
  \label{fig:standard_LQ_results_D5}
\end{figure}

\subsubsection{Time-Homogeneous Case}
We start with the time-homogeneous case, i.e., $\bm B(t)\equiv \bm B$, $\bm\Sigma(t)\equiv \bm\Sigma$, $\bm\alpha(t)\equiv \bm\alpha$, $\bm\Lambda(t)\equiv \bm\Lambda$, $\bm R(t)\equiv \bm R$ and $\bm Q(t)\equiv \bm Q$. In this case,
there exists a stationary pair $(\bm H, g_\gamma)$ that solves the following.
\begin{equation}
\begin{aligned}
\label{eq:ARE_homo_entropy_merged}
\bm 0 =\beta \bm H+\bm Q-\bm H\bm B\bm R^{-1}\bm B^\top \bm H,\,
\beta g_\gamma=
\operatorname{Tr}\!\bigl(\bm{\Sigma}\bm{\Sigma}^\top \bm{H}\bigr)
+\operatorname{Tr}\!\Bigl(\bm{\Lambda}\,\operatorname{diag}(\bm{\alpha})\,\bm{H}\,\operatorname{diag}(\bm{\alpha})\Bigr)
+\frac{\gamma}{2}\bigl(m\log(\pi\gamma)-\log\det \bm{R}\bigr).
\end{aligned}
\end{equation}
The optimal stochastic policy remains Gaussian with mean $\bm R^{-1}\bm B^\top \bm H\bm x$ and covariance $\frac{\gamma}{2}\bm R^{-1}$, and the standard case follows by setting $c_\gamma=0$.

\smallskip
\noindent\textbf{Standard LQ problem ($\gamma=0$).}
We first solve a 5-dimensional LQ control problem, and train the actor and critic networks $(\pi_\theta, V_\psi)$ using Algorithm~\ref{alg:ours} for $N_{\mathrm{itr}}=1{,}000$ iterations, with horizons $T\in\{10,100\}$.  We then freeze all networks and generate the approximated state trajectories $\hat {\bm{X}}_t$, value function $\hat V$ and the feedback control $\hat {\bm u}$ using step size $\delta_t=0.01$.
Figure~\ref{fig:standard_LQ_results_D5} shows that the learned
values are consistent with the analytical solution, and remain numerically stable even over the long horizon $T=100$.

\smallskip
\noindent\textbf{Entropy-regularized LQ problem ($\gamma>0$).}
We next consider the entropy-regularized variant with $\gamma=0.05$ under the same problem
setup and evaluation procedure. For the  LQ problem, the Hamiltonian $\mathscr{H}$ is quadratic in $\bm u$, so the optimal entropy-regularized policy is Gaussian. Accordingly, when parameterizing the actor we may treat the flow map as the identity and use only the Gaussian policy in \eqref{eq:policy_parameterized_impl}. Equivalently, sampling actions reduces to setting $\bm u$ to the base variable in \eqref{eq:flow_transform}, i.e., $\bm u=\bm z_0$.
Figure~\ref{fig:exploratory_LQ_results_D5} reports the mean of stochastic
control as well as  the approximated state and value trajectories,
showing that Alg.~\ref{alg:ours} remains numerically stable and accurate under exploration.

\begin{figure}[htbp]
  \centering
  \begin{subfigure}{0.9\textwidth}
    \centering
    \includegraphics[width=\linewidth]{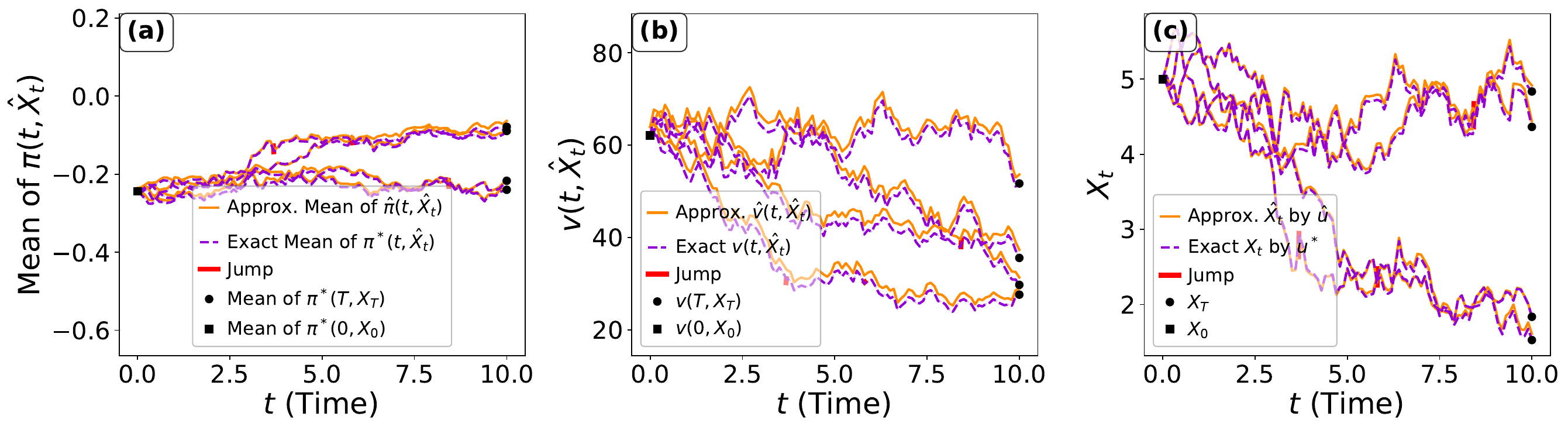}
  \end{subfigure}
  \caption{Predicted (a) mean of stochastic policy, (b) value function, and (c) state trajectory for the entropy-regularized LQ problem with $d=5$ and $\gamma=0.05$. All remaining parameters and the evaluation protocol are the same as in Fig.~\ref{fig:standard_LQ_results_D5}, with horizon $T=10$.}
  \label{fig:exploratory_LQ_results_D5}
\end{figure}

To test scalability in the state dimension, we repeat the entropy-regularized experiment ($\gamma=0.05$) for
$d\in\{1,5,20,50\}$. Table~\ref{tab:lq_training_error_vs_d} reports the  RMSE for the value and control over three seeds. The value error $\mathcal{E}_V$ stays small across dimensions,
whereas the control error $\mathcal{E}_u$ increases linearly with $d$, consistent with the greater difficulty of learning high-dimensional feedback under exploration noise.

\begin{table}[htbp]
\centering
\caption{Training errors for the entropy-regularized ($\gamma=0.05$) LQ problem across state dimensions $d$. All runs use $N_{\mathrm{itr}}=1{,}000$ iterations and results are averaged over three random seeds.}

\label{tab:lq_training_error_vs_d}
\small
\begin{tabular}{lcccc}
\toprule
& $d=1$ & $d=5$ & $d=20$ & $d=50$ \\
\midrule
$\mathcal{E}_{u}$ & $0.1441$ & $0.3992$ & $1.9205$ & $4.9366$ \\
$\mathcal{E}_{V}$ & $0.0034$ & $0.0036$ & $0.0046$ & $0.0038$ \\
\bottomrule
\end{tabular}
\end{table}

\subsubsection{Time-Inhomogeneous Case}
\textbf{Convergent coefficients. } We next consider a time-inhomogeneous LQ problem such that as $t \rightarrow \infty$,
\begin{equation*}
\bm B(t)\to \bm B_\infty,\,
\bm \Sigma(t)\to \bm \Sigma_\infty,\,
\bm\alpha(t)\to \bm\alpha_\infty,\,
\bm \Lambda(t) \to \bm \Lambda_\infty,
\, \bm Q(t)\to \bm Q_\infty,\,
\bm R(t)\to \bm R_\infty,\,
\end{equation*}
with sufficiently fast convergence so that the discounted reward is well defined.

The computation of the benchmark solution is less direct than in the time-homogeneous case, since the ODE system \eqref{eq:lq_riccati_scalar_inhomo} does not come with an explicit terminal boundary condition. For the present choice of convergent coefficients, the terminal boundary can be approximated by introducing a sufficiently large terminal time \(T_\infty\) and using the limiting stationary solution there as an approximate boundary condition. In our implementation, we take $T_\infty = 3T$. The control process is considered on the interval $[0,T]$, while the \((\bm H(t), g_\gamma(t))\) pairs are numerically recovered by applying the Euler method \cite{hairer1993solving} to integrate  \eqref{eq:lq_riccati_scalar_inhomo} backward over \([0, T_\infty]\).
The limiting pair \((\bm H_\infty, g_{\gamma,\infty})\) is determined from the stationary version of \eqref{eq:ARE_homo_entropy_merged}, where \((\bm H, g_\gamma)\) is replaced by \((\bm H_\infty, g_{\gamma,\infty})\), and the time-dependent coefficients $\bm{R}$ and $\bm{Q}$ are replaced by their limiting values \(\bm{R}_\infty\) and \(\bm{Q}_\infty\).

\smallskip
\noindent\textbf{Periodic coefficients. }
We now turn to the periodic setting, i.e., let the following coefficients be $P$-periodic for some $P>0$:
\begin{equation}
\begin{aligned}
    \bm B(t+P)=\bm B(t),\,
\bm\Sigma(t+P)=\bm\Sigma(t),\,
\bm\alpha(t+P)=\bm\alpha(t), \, \bm \Lambda(t+P) = \bm \Lambda(t), \, \bm Q(t+P) = \bm Q(t), \, \bm R(t+P) = \bm R(t).
\end{aligned}
\end{equation}

In this case, we seek the periodic solution $(\bm H, g_\gamma)$ of \eqref{eq:lq_pi_star_convergent_merged}-\eqref{eq:lq_riccati_scalar_inhomo}, with boundary conditions $\bm H(t+P)=\bm H(t)$, and $g_\gamma(t+P)=g_\gamma(t)$. Generally, iteration methods can be applied to solve the initial value $\bm H(0)$, and here we turn \eqref{eq:lq_pi_star_convergent_merged} into a shooting problem \cite{bittanti1991periodic}. Then $\bm H(t)$ on $[0,P]$ can be calculated.
Once the periodic function $\bm H(t)$ is determined, $g_\gamma$ can be constructed via $g_\gamma(t)=\frac{1}{1-e^{-\beta P}}\int_0^P e^{-\beta\tau}\Big( \text{Tr}\!\big(\bm\Sigma(t+\tau) \bm\Sigma(t+\tau)^\top \bm H(t+\tau)\big) + \mathrm{Tr}(\bm\Lambda(t+\tau) \text{diag}(\bm\alpha(t+\tau))\bm H(t+\tau) \text{diag}(\bm\alpha(t+\tau))) +c_\gamma(t+\tau)\Big)\,\mathrm{d}\tau.$

We validate the method in two time-inhomogeneous settings: one with exponentially decaying coefficients and one with sinusoidally varying coefficients ($P = 10$); see Table \ref{tab:param_setting_all} for detailed parameter choices.
In both cases, we train for $N_{\mathrm{itr}}=3,000$ iterations on $[0,20]$, using step size $\delta_t=0.01$. The periodic case uses exploration intensity $\gamma = 0.05$ and the other case does not.
Figure~\ref{fig:time_inhomo_convergent} compares the learned actor and critic with the benchmark solution: the learned mean control tracks the reference feedback (panel~(a)), the value trajectory aligns with the benchmark (panel~(b)), and the state paths nearly coincide, including around jump times (panel~(c)), demonstrating good agreement despite explicit time inhomogeneity and regardless of exploratory intensities.

\begin{figure}[htpb]
    \centering
\includegraphics[width=0.9\linewidth]{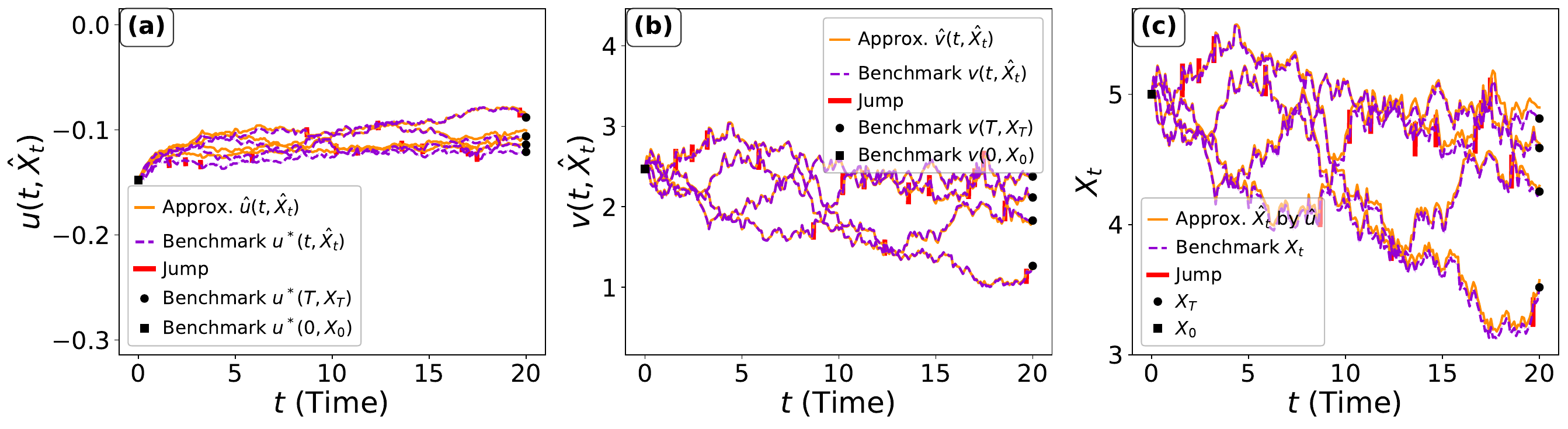}\\
     \includegraphics[width=0.95\linewidth]{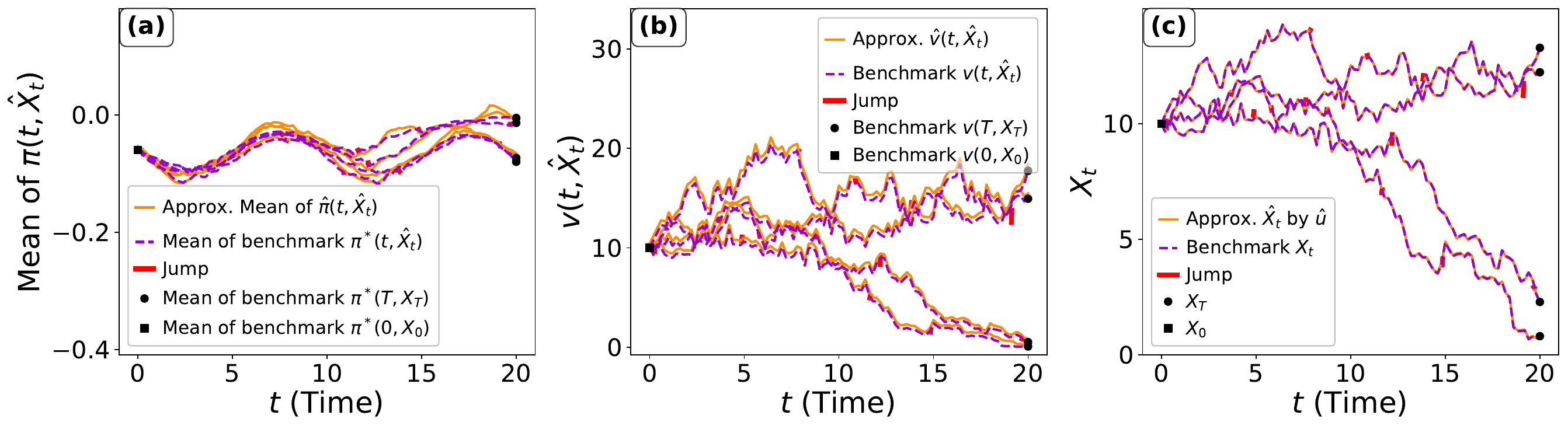}
    \caption{Predicted (a) optimal control / mean of stochastic policy, (b) value function, and (c) state trajectory for time-inhomogeneous LQ problem on horizon $T = 20$: convergent coefficients (top) and periodic coefficients (bottom).
    Parameters: $N_{\mathrm{itr}}=3{,}000$ and $\delta_t = 0.01$, with intensity $\gamma=0$ (top) and $\gamma = 0.05$ (bottom).}
    \label{fig:time_inhomo_convergent}
\end{figure}

\subsection{Merton Problem in a Jump-diffusion Market}
\label{sec:merton}

We consider Merton's portfolio optimization problem in a jump-diffusion market.
The investor chooses a strategy $(u_t)_{t\ge0}$ to allocate the fraction of the current wealth invested in the risky asset and of a risk-free asset with interest rate $r>0$. The resulting controlled wealth process $(X_t)_{t\ge0}$ satisfies
\begin{equation}
\mathrm{d}X_t
= \bigl(r + u_t(\mu - r)\bigr) X_t \, \mathrm{d}t
  + \sigma u_t X_t \, \mathrm{d}W_t
  + \alpha u_t X_t \, \mathrm{d}M_t\,.
\label{eq:merton-wealth-jump}
\end{equation}
Suppose that the investor has a reward function $f(x)=\frac{x^p}{p}$ with $0<p<1$ (i.e., CRRA utility), and seeks to maximize the expected discounted reward. Since the model parameters $(r,\mu,\sigma,\lambda,\alpha)$ and the running reward $f$ are time-homogeneous, the problem admits a stationary solution.

\smallskip
\noindent\textbf{Standard Merton problem ($\gamma=0$).} In this case, the optimal investment fraction $u^\ast$ can be solved by $(\mu-r) +(p-1)\sigma^2 u^* +\lambda\alpha\Bigl((1+\alpha u^*)^{p-1}-1\Bigr)=0$ provided that $1+\alpha u^*>0$,
and the analytical value function $V(t,x) = V(x)=\frac{h^*}{p}x^p$, where $h^*$ satisfies $h^* \big[
p\big(r+(\mu-r)u^*\big)+\frac{1}{2}p(p-1)\sigma^2 (u^*)^2 +\lambda\bigl((1+\alpha u^*)^p-1-p\alpha u^*\bigr) -\beta\big] +1=0$.
Based on this analytical benchmark, Figure~\ref{fig:classic_merton_results} compares the learned optimal control, value function, and state trajectories with the solution. Close agreement confirms that Alg.~\ref{alg:ours} accurately recovers the classical Merton solution in this jump–diffusion setting.

\begin{figure}[htbp]
  \centering
  \begin{subfigure}{0.9\textwidth}
    \centering
\includegraphics[width=\linewidth]{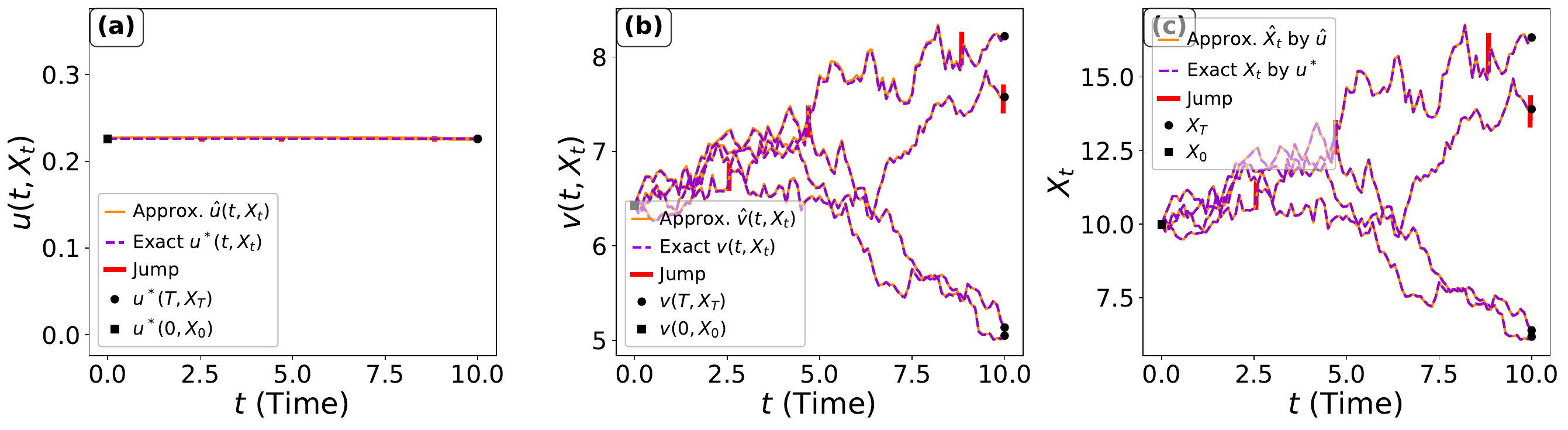}
  \end{subfigure}
  \caption{Predicted (a) optimal control, (b) value function, and (c) state trajectories for the standard Merton problem ($\gamma=0.0$) on horizon $T = 10$.
  Parameters: $\mu=0.05$, $r=0.03$, $\sigma=0.4$, $\lambda=0.2$, $\alpha=0.3$, with total iterations $N_{\text{itr}}=2{,}000$ and $\delta_t=0.01$.}
  \label{fig:classic_merton_results}
\end{figure}

\smallskip
\noindent\textbf{Entropy-regularized Merton problem ($\gamma>0$).}
According to \eqref{eq:tilde-f-def}, the running reward is given by $\tilde f(x;\pi)=\frac{x^p}{p}+\gamma \mathcal{S}(\pi)$.
The value function $V$ satisfies the entropy-regularized HJB equation in \eqref{eq:soft-HJB-opt-general}, which can be simplified as
\begin{equation}
    \label{eq:V_pinn}
    V(x)
    =
    \frac{\gamma}{\beta}
    \log \int_{\mathcal A}
    \exp\!\Big(
    \tfrac{1}{\gamma}\,
    \mathscr{H}(x,u,\nabla_x V,\nabla_x^2 V)
    \Big)\,\mathrm du
\end{equation}
together with the Gibbs-type optimal policy $\pi^*(u\mid x)\!\propto\! \exp\!\big(\mathscr{H}/\gamma\big)$; see the supplementary material for a brief derivation.
In general, the optimal policy $\pi^*$ is not Gaussian, since the Hamiltonian in the present Merton setting is not quadratic in $u$, unlike in the LQ case.
As a result, under entropy regularization, the optimal policy is typically non-Gaussian, which leads to two main challenges: first, the policy distribution can no longer be accurately captured by a simple Gaussian family; second, in the absence of an explicit expression for the optimal policy, the value function $V$ generally does not admit a closed-form characterization.
Fortunately, $V$ can be computed numerically to high accuracy via a physics-informed neural network (PINN) solver \cite{raissi2019physics} applied to \eqref{eq:V_pinn}. The resulting numerical approximation of the value function can then be used to recover the corresponding optimal policy, thereby providing a benchmark solution. Full implementation details of the PINN solver are deferred to Appendix~\ref{subsec:ac_arch}.

With such a benchmark, our parameterization of \(\pi_\theta\) using conditional normalizing flows, introduced in Section~\ref{sec:numerical_implementation}, provides a flexible framework for representing general non-Gaussian distributions. Figure~\ref{fig:exploratory_merton_results} compares the conditional policy distributions at different time points, the value function, and the state trajectories. The parameter settings are reported in Table~\ref{tab:param_setting_all}, while the trajectory construction is described in Appendix~\ref{subsec:ac_arch}. The close agreement with the PINN benchmark demonstrates the effectiveness of our algorithm and of the flow-based policy parameterization.

\begin{figure}[htbp]
  \centering
  \begin{subfigure}{0.9\textwidth}
    \centering
    \includegraphics[width=\linewidth]{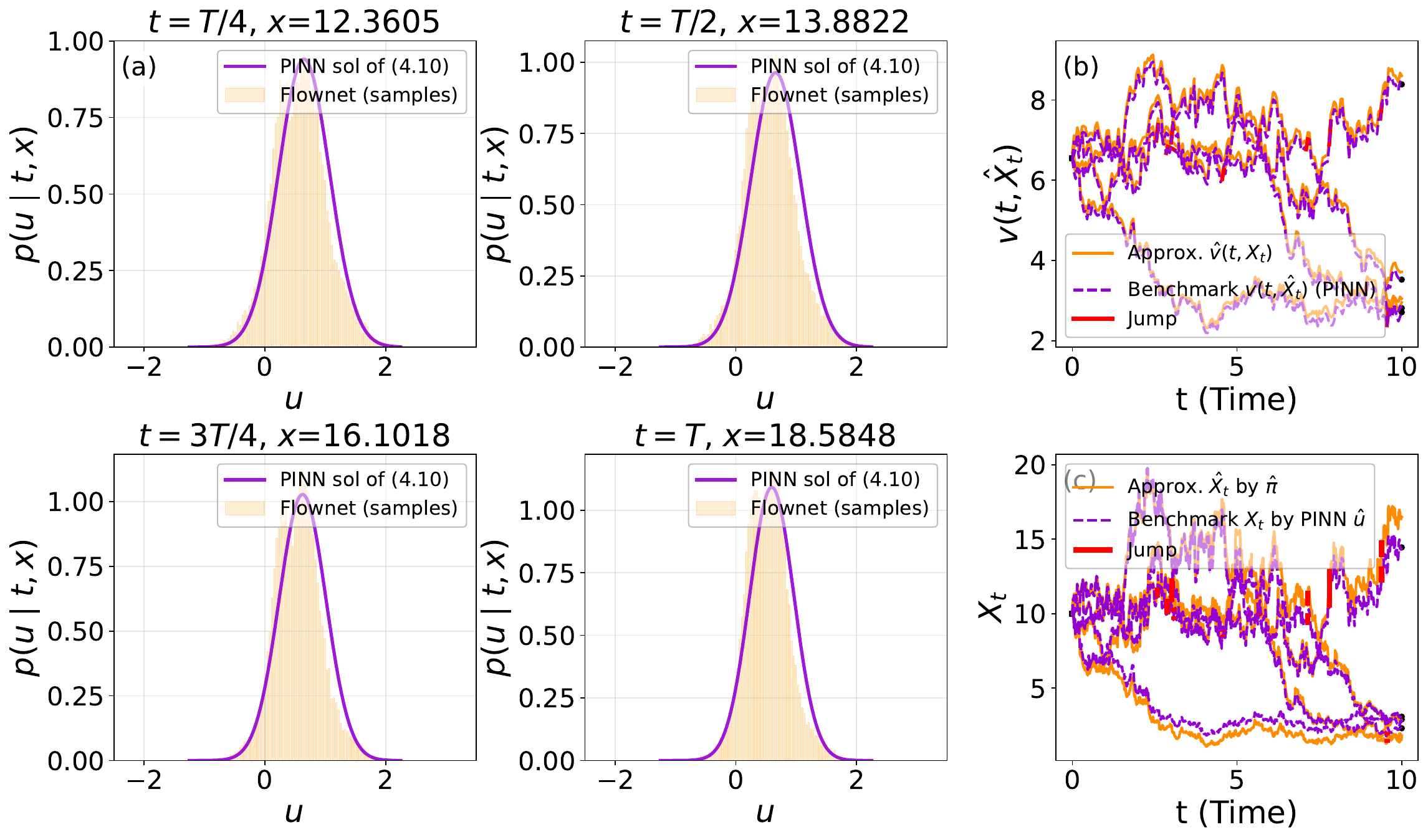}
  \end{subfigure}
  \caption{Plots of (a) densities $p(u|\, t, \bm x)$ of the stochastic policy $\pi(u | t, \bm x)$ at $t=T/4$, $T/2,\, 3T/4, \, T$ and some $\bm x$,  (b) value function, and (c) state trajectories for the entropy-regularized Merton problem ($\gamma=0.05$) on horizon $T=10$.
  The total iterations is $N_{\text{itr}}=2{,}000$ and $\delta_t=0.05$.}
  \label{fig:exploratory_merton_results}
\end{figure}

\subsection{Multi-Agent Portfolio Game in Jump-diffusion Market}
\label{sec:multi-agent}

Our final example considers a game-theoretic extension of the Merton problem presented in Section \ref{sec:merton}, where each agent's reward depends on performance relative to the population average. This example serves two purposes: it demonstrates that the proposed algorithm scales well to high-dimensional systems and that it remains effective in the presence of strategic interactions.

We consider $n$ agents, indexed by $i\in\{1,\dots,n\}$, each choosing a control process $u_i$ to maximize their own expected discounted reward. The wealth process of agent $i$ evolves as
\begin{equation}
dX_t^i = u_i\bigl(b_i\,\mathrm{d}t+ \eta_i\,\mathrm{d} W_t^i + \sigma_i\,\mathrm{d}W_t^0
+ \alpha_i\,\mathrm{d}M_t ^i + \xi_i\,\mathrm{d}M_t^0\bigr),
\label{eq:SDE-X-i}
\end{equation}
where \(W^i\) is the idiosyncratic Brownian motion of agent \(i\), \(W^0\) is the common Brownian motion shared by all agents, \(M^i\) is the compensated Poisson jump process specific to agent \(i\), and \(M^0\) represents common jump shocks.
Fixing the strategies $\bm u_{-i}$ of all other agents, agent $i$ chooses $u_i$ to maximize
\begin{equation}\label{eq:Jgame}
\small
\begin{aligned}
J^i(t,x,y;u_i, \bm u_{-i})
:=
\mathbb E\Big[
\int_t^\infty e^{-\beta(s-t)}
\, f_i\bigl(X_s^i,Y_s^i\bigr)\,\mathrm{d}s
\;\big|\; X_t^i=x,\;Y_t^i=y
\Big],
 \end{aligned}
\end{equation}
where $Y_t^i := \frac{1}{n}\sum_{j\neq i} X_t^j$ represents the average wealth of the other agents, and reward $f_i(x, y) = -\exp\Big(
 -\frac{1}{\varrho_i}
 \Big(\big(1-\tfrac{\varpi_i}{n}\big)x - \varpi_i y\Big)
 \Big)$ measures relative performance, with risk-tolerance parameter $\varrho_i>0$ and competition weight $\varpi_i\in\mathbb R$.

The goal of this multi-agent game is to find a Nash equilibrium, namely a collection of controls $\bm u^* = (u_1^*,\dots,u_n^*)$ such that no agent can improve its own objective by deviating unilaterally while the others keep their strategies fixed. In other words, for every agent $i$, the control $u_i^*$ is an optimal response to $\bm u_{-i}^*$.
To characterize such an equilibrium, we first solve the best-response problem of a representative agent and then impose consistency across all agents. Fixing the strategies $u_j$ of all agents $j\neq i$, we define the best-response value function of agent $i$ by
\begin{equation}\label{def:Vi-game}
V^i(t,x,y)
:= \sup_{(u_i(s))_{s\ge t}} J^i(t,x,y;u_i,\bm u_{-i}).
\end{equation}

For this portfolio game, the analytical benchmark is characterized by a coupled first-order system: the equilibrium investment strategy $\bm u^\ast = (u_1^\ast,\ldots,u_n^\ast)$ satisfies $\Psi_i'(u_i^\ast)=0, i=1,2,\ldots,n,$ where $\Psi_i(u)
=
-\chi_i b_i u
+ \frac12\chi_i^2(\eta_i^2+\sigma_i^2)u^2
- \chi_i\rho_i\sigma_i\widehat{u\sigma}\,u
+ \lambda_i\bigl(e^{-\chi_i\alpha_i u}-1+\chi_i\alpha_i u\bigr)
+ \lambda_0\Bigl(
e^{-\chi_i\xi_i u + \rho_i\widehat{u\xi}} + \chi_i\xi_i u
\Bigr)$,
with $\widehat{u\sigma}:= \frac1n\sum_{j\neq i} u_j\sigma_j,
\, \widehat{u\xi}:= \frac1n\sum_{j\neq i} u_j\xi_j,
\, \chi_i := \frac{1-\varpi_i/n}{\varrho_i},
\, \rho_i := \frac{\varpi_i}{\varrho_i}.$
Correspondingly, the value function of agent $i$ takes the form
\begin{equation}
\small
V^i(x,y)
= -\frac{1}{\beta - \Lambda_i^*}
\exp\big(
-\frac{1}{\varrho_i}
\big(\big(1-\tfrac{\varpi_i}{n}\big)x - \varpi_i y\big)
\big),
\label{eq:optimal_V_multi-CARA}
\end{equation}
provided that $\beta > \Lambda_i^*$, where $\Lambda_i^* := \Psi_i(u_i^*) + C_i,$ and
$C_i = \rho_i \,\widehat{ub} + \frac{1}{2}\,\rho_i^2 \big(\frac{1}{n^2}\sum_{j\neq i}(u_j \eta_j)^2 \\ + \big(\frac{1}{n}\sum_{j\neq i} u_j \sigma_j\big)^2
  \big) + \sum_{j\neq i}\lambda_j\big(
      \exp\big(\frac{\varpi_i}{\varrho_i}\,\frac{u_j \alpha_j}{n}\big) - 1
      - \frac{\varpi_i}{\varrho_i}\,\frac{u_j \alpha_j}{n}
  \big)
- \lambda_0\,\rho_i\,\widehat{u\xi}$ is independent of $u_i$. Here $\widehat{ub} := \frac1n\sum_{j\neq i} u_j b_j.$
The proof of the above characterization can be found in the supplementary material Section~\ref{appen:proof_multi-agent_game}; we use it as the analytical baseline when evaluating our numerical method.

\begin{figure}[tb]
  \centering
  \begin{subfigure}{0.9\textwidth}
    \centering
    \includegraphics[width=\linewidth]{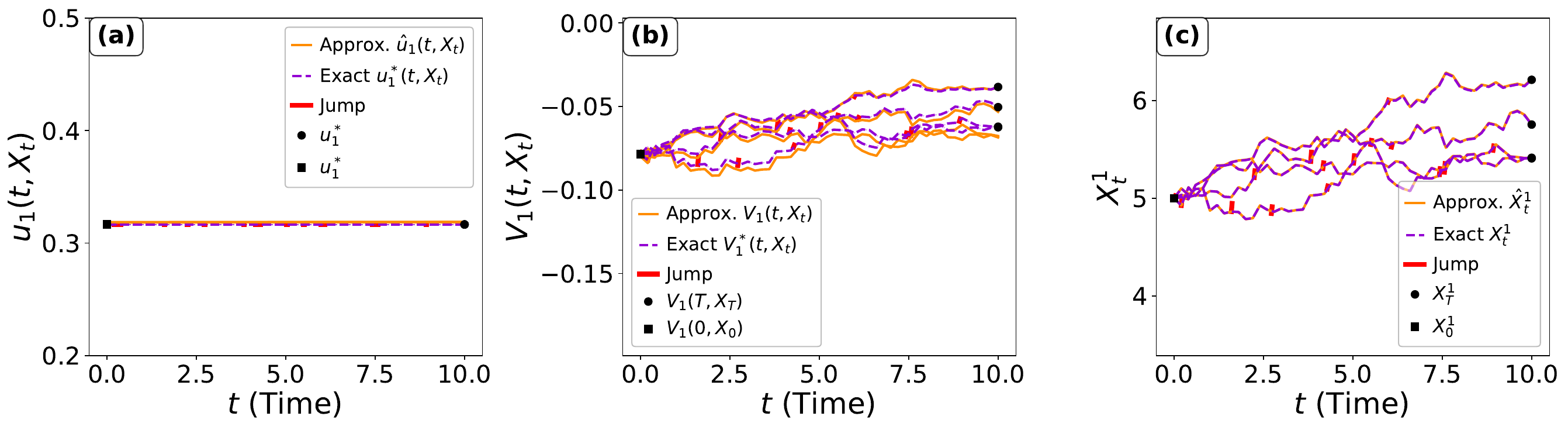}
  \end{subfigure}
  \caption{Comparison of (a) optimal control, (b) value function and (c) state trajectories vs. benchmarks for agent $1$ in the Merton portfolio game. Results are shown after $N_{\text{itr}} = 1,000$ training iterations with $\delta_t=0.02$.}
  \label{fig:multi-agent-D25}
\end{figure}

Figures~\ref{fig:multi-agent-D25}--\ref{fig:multi-agent-D25-loss} illustrate the numerical results with $n=25$ agents.
We consider a heterogeneous setting where agent 1 faces a different market and preference parameters, while agents $2,\dots,n$ are homogeneous. The full parameter settings are reported in Appendix~\ref{sec:numericalsupplements} Table~\ref{tab:param_setting_all}. All plots are generated after $N_{\text{itr}} = 1,000$ training iterations with time step  $\delta_t=0.02$. Figure \ref{fig:multi-agent-D25} depicts the predicted control, value function, and state trajectories for agent 1. Figure \ref{fig:multi-agent-D25-loss} displays the RMSEs of the control and the value function for each agent, together with the averaged training loss of all agents.
Overall, the close alignment with the benchmarks, together with the small RMSEs and stable training loss, indicates that our method achieves promising and stable performance even in high dimensions and in the presence of strategic interactions.

\begin{figure}[htbp]
  \centering
  \begin{subfigure}{0.90\textwidth}
    \centering
\includegraphics[width=\linewidth]{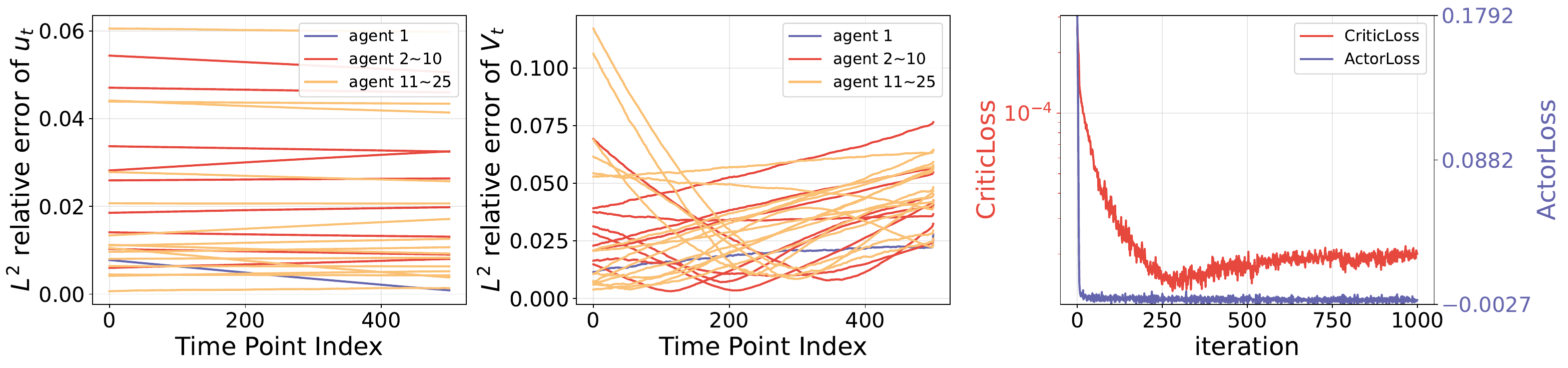}
  \end{subfigure}
  \caption{RMSEs of the learned control and value functions across agents ($n = 25$), together with the training loss. The full parameter setting is reported in Table~\ref{tab:param_setting_all}.}
  \label{fig:multi-agent-D25-loss}
\end{figure}

Table \ref{tab:runtime_nd} reports the runtime and RMSEs (Eqs. \eqref{eq:error_X_V_rel}--\eqref{eq:error_u_rel}) for different choices of time step $\delta_t$ and number of agents $n$. The runtime increases approximately linearly with $n$, indicating favorable scalability with respect to the problem dimension.
The errors remain comparable across these configurations, indicating stable performance as the problem size grows.

\begin{table}[htpb]
\centering
\caption{Runtime and last-iteration relative errors for different (number of time points,\, number of agents) pairs $=(K, n)$ of our algorithm for multi-agent game.}
\label{tab:runtime_nd}
\footnotesize
\renewcommand{\arraystretch}{0.9}
\begin{tabular}{llcccc}
\toprule
 &  & $n{=}2$ & $n{=}5$ & $n{=}10$ & $n{=}25$ \\
\midrule
\multirow{3}{*}{$K=100$}
& Runtime (min) & 18.98 & 40.67 & 76.03 & 205.76 \\
& $\mathcal{E}_{V}$ & 0.1853 & 0.1866 & 0.1490 & 0.1767 \\
& $\mathcal{E}_{\text{u}}$ & 0.0300 & 0.0201 & 0.0330 & 0.0318 \\
\midrule
\multirow{3}{*}{$K=500$}
& Runtime (min) & 94.72 & 202.19 & 400.77 & 1050.64 \\
& $\mathcal{E}_{V}$ & 0.0617 & 0.0355 & 0.0447 & 0.0476 \\
& $\mathcal{E}_{\text{u}}$ & 0.0081 & 0.00137 & 0.0176 & 0.0261 \\
\bottomrule
\end{tabular}
\end{table}

\section{Conclusions and Discussions}\label{sec:conclusion}

This paper develops a reinforcement learning framework for infinite-horizon time-inhomogeneous stochastic control problem subject to jump-diffusion and entropy regularization. We introduce a continuous-time little $q$-function, define an appropriate time-dependent occupation measure, and establish its structural properties. This representation leads to a general policy gradient formula, and we design an actor-critic algorithm tailored to general time-inhomogeneous jump-diffusion dynamics and non-Gaussian stochastic policies via conditional normalizing flow. We also derive explicit solutions for the value function and the optimal (stochastic) policy for several canonical specifications, including LQ control, the Merton portfolio problem, and multi-agent portfolio game with CARA utilities in jump-diffusion markets. These closed-form characterizations provide ground-truth benchmarks for evaluating RMSEs of the proposed algorithm.
Our method  is validated on a suite of low- and high-dimensional experiments, including settings with jump components and time-dependent coefficients. Across multiple evaluation metrics, the learned policies closely track the analytic solutions when available, and our algorithm exhibits strong performance broadly.

There are several natural directions for future work. Building on the proposed actor-critic framework, it would be interesting to study how alternative entropy regularizers affect the control problem, for example the Tsallis entropy considered in \cite{bo2024continuous}. However, its structure typically precludes closed-form optimal stochastic policies, making benchmarking more challenging. On the theoretical side, extending the little $q$-framework and occupation measure to partially observed models or mean-field interaction structures would further bridge continuous-time RL and modern stochastic control with jumps. On the algorithmic side, scaling our approach to very high-dimensional problems and integrating it with large language models or agent-based architectures are promising directions; recent progress on agent-based methods and DFA-type accelerators suggests substantial potential for speeding up RL in complex control environments \cite{alsadat2024multi,alsadat2025using}.

\medskip
\noindent {\bf Acknowledgments.} Y.Z.\ and L.G.\ were partially supported by the National Key R\&D Program of China (grant 2021YFA0719200). R.H.\ was partially supported by the ONR grant N00014-24-1-2432, the Simons Foundation (MP-TSM-00002783), and the NSF grant DMS-2420988.

\appendix
\section{More Numerical Details}\label{sec:numericalsupplements}

\subsection{Neural Network (NN) Architectures and Experimental Details}
\label{subsec:ac_arch}

\smallskip
\noindent\textbf{Critic network.}
We parameterize the critic $V_\psi:(t,\bm x)\mapsto\mathbb{R}$ by a ResNet \cite{he2016deep} of depth $3$,  with input dimension $d+1$, hidden width $d+10$, \texttt{tanh} activations, and a scalar linear readout. All critic networks are optimized with Adam (learning rate~$10^{-3}$) and a fixed random seed ($2025$).  For the LQ cases and the multi-player game we decay the learning rate with a multi-step schedule; for the Merton problem with flow layers we use a \texttt{CosineAnnealingWarmUp} schedule.

\smallskip

\noindent\textbf{Actor network.}
We parameterize the actor $\pi_\theta(\cdot\mid t,\bm x)$ as a conditional normalizing flow. Given $(t,\bm x)$, a ResNet with \texttt{tanh} activations, depth~$3$, and hidden width~$d+10$ outputs the parameters of the Gaussian base distribution in \eqref{eq:policy_parameterized_impl}.
For the standard case with $\gamma=0$, we use a fixed standard deviation $\mathrm{Std}=0.1$ instead of a learnable $\mathrm{Std}_\theta$. This follows \cite{montenegro2024learning}, where fixing the exploration scale is found to improve stability when stochastic policies are used to learn an underlying deterministic optimal control, while still being sufficient for convergence. This applies to all LQ cases and the multi-player Merton game.
The actor is optimized with Adam using learning rate $10^{-3}$ for time-homogeneous problems and $5\times 10^{-4}$ for time-inhomogeneous problems, together with a multi-step learning-rate schedule.

For the entropy-regularized Merton problem, samples from the Gaussian base distribution are further transformed by an invertible flow map~$F_\theta$.
Specifically, we employ a conditional rational-quadratic spline coupling flow~\cite{durkan2019neural} on~$\mathbb{R}^m$ for $F_\theta$, where each coordinate transformation is represented by a piecewise rational-quadratic spline.
We use $K_{\mathrm{bin}}=6$ bins with identity tails outside $[-B_{\mathrm{flow}},B_{\mathrm{flow}}]=[-2.5,2.5]$.
The spline parameters are produced by a ResNet with \texttt{tanh} activations, depth~$2$, and hidden width~$32$.
To improve stability in early training, we initialize the flow close to the identity map and freeze its parameters for the first $30$ actor updates.
After this warm-up stage, the flow is applied directly, i.e., $\bm z_K = F_\theta(\bm z_0)$.
Bounded actions are enforced through a temperature-controlled sigmoid squashing function $\bm u=S(\bm z_K;\tau)$.
We anneal $\tau$ from $2.0$ to $1.0$ over the first $30$ squash-delay steps so that the squashing is milder during warm-up and actions are less likely to saturate near the boundaries.

When the flow $F_\theta$ is enabled, the actor's base network and the spline flow layers are trained using a single Adam optimizer with two parameter groups.
The base network uses learning rate $6\times 10^{-5}$, while the flow layers use learning rate $1\times 10^{-5}$.
We choose a smaller learning rate for the flow because the spline--squash composition is more sensitive to parameter updates, and a lower step size improves training stability.
Both parameter groups are scheduled jointly by a single \texttt{CosineAnnealingWarmUp} scheduler.

\smallskip
\noindent\textbf{PINN approach for Merton's problem.}
We compute a reference solution by solving simplified HJB equation \eqref{eq:V_pinn} on uniform grids $x_{\mathrm{grid}}\subset [x_{\min},x_{\max}]$ and $u_{\mathrm{grid}}\subset [u_{\min},u_{\max}]$, with $n_x=500$ grid points for $x$ and $n_u=400$ grid points for $u$.
The value function is parameterized by a feedforward NN $V_\phi(x)$ with $5$ hidden layers, each of width $256$, and \texttt{tanh} activations. All first- and second-order derivatives of $V_\phi$ entering the Hamiltonian $\mathscr{H}(x,u,\nabla_x V_\phi,\nabla_x^2 V_\phi)$ are computed via automatic differentiation.
To approximate the integral in \eqref{eq:V_pinn}, we use the trapezoidal quadrature on $u_{\mathrm{grid}}$, denoted by $\mathcal{Q}_u[\cdot]$. For each $(x_i,u_j)$, let $H_{ij}=\mathscr{H}\!\bigl(x_i,u_j,\nabla_x V_\phi(x_i),\nabla_x^2 V_\phi(x_i)\bigr)$. Then the right-hand side of \eqref{eq:V_pinn} is approximated by $V_{\mathrm{rhs}}(x_i)=\frac{\gamma}{\beta}\log\!\big(\mathcal{Q}_u[\exp(H_{ij}/\gamma)]\big)$. In practice, for numerical stability, we evaluate it through a log-sum-exp implementation of the quadrature.
The network parameters $\phi$ are trained by minimizing the mean squared residual $\mathcal{L}_{\mathrm{PINN}}(\phi) = \frac{1}{n_x}\sum_{i=1}^{n_x} \bigl(
V_\phi(x_i)-V_{\mathrm{rhs}}(x_i)
\bigr)^2,$ using Adam with learning rate $5\times 10^{-4}$ for $2,000$ iterations.

After obtaining the PINN approximation of the value function, we recover the optimal policy on the grid $u_{\mathrm{grid}}$ as the Gibbs distribution induced by the same quadrature normalization, $\pi_{ij} = \frac{w_j \exp(H_{ij}/\gamma)}{\sum_{k=1}^{n_u} w_k \exp(H_{ik}/\gamma)},$ where $\{w_j\}_{j=1}^{n_u}$ are the trapezoidal weights.

\subsection{Parameter Setting}\label{appen:parameter}

Table \ref{tab:param_setting_all} records parameters in our experiments.

\begin{table}[p]
\centering
\caption{Parameter settings in numerical experiments.}
\label{tab:param_setting_all}
\renewcommand{\arraystretch}{1.22}
\setlength{\tabcolsep}{5pt}

\rotatebox{90}{
\resizebox{0.9\textheight}{0.45\linewidth}{
\begin{tabular}{@{}l|cc|c|cc|c@{}}
\toprule
& \multicolumn{2}{c|}{\textbf{Time-inhomogeneous LQ}}
& \textbf{LQ (Homo.)}
& \multicolumn{2}{c|}{\textbf{Merton}}
& \textbf{Multi-Agent CARA} \\
\cmidrule(lr){2-3}\cmidrule(lr){5-6}
& \textbf{Convergent} & \textbf{Periodic} & & \textbf{Standard} & \textbf{Entropy} & \\
\midrule

Dim / agent
& \multicolumn{2}{c|}{$d=1$}
& $d = 1, 5, 20, 50$
& \multicolumn{2}{c|}{$d = 1$}
& $n = 25$ \\[2pt]
\midrule

Drift coeff.
& \makecell[l]{$B_t u_t=b(t)\,u_t,$\\
$b(t)=b_\infty+(b_0-b_\infty)e^{-\kappa_b t}$\\
$b_0=0.6,\ b_\infty=0.5,\ \kappa_b=1.0$}
& \makecell[l]{$B_t u_t=b(t)\,u_t,$\\
$b(t)=b_{\mathrm{bar}}+b_{\mathrm{amp}}\sin\!\big(\tfrac{2\pi t}{P_b}+\phi_b\big)$\\
$b_{\mathrm{bar}}=0.12,\ b_{\mathrm{amp}}=0.06,$\\
$P_b=10.0,\ \phi_b=0.0$}
& $B \mathbf u_t = 0.5\,\mathbf I\,\mathbf u_t$
& \multicolumn{2}{c|}{\makecell[l]{$(r+u_t(\mu-r))X_t$\\
$\mu = 0.05$ \quad $\mu=0.1$\\
$r = 0.03$ \quad $r=0.05$}}
& \makecell[l]{$b_i=
\begin{cases}
0.05, & i=1,\\
0.02, & 2\le i\le n
\end{cases}$} \\[2pt]
\midrule

Diffusion coeff.
& \makecell[l]{$\Sigma_t=\sigma(t),$\\
$\sigma(t)=\sigma_\infty+(\sigma_0-\sigma_\infty)e^{-\kappa_\sigma t}$\\
$\sigma_0=0.3,\ \sigma_\infty=0.2,\ \kappa_\sigma=1.0$}
& \makecell[l]{$\Sigma_t=\sigma(t),$\\
$\sigma(t)=\sigma_{\mathrm{bar}}+\sigma_{\mathrm{amp}}\sin\!\big(\tfrac{2\pi t}{P_\sigma}+\phi_\sigma\big)$\\
$\sigma_{\mathrm{bar}}=0.2,\ \sigma_{\mathrm{amp}}=0.1,$\\
$P_\sigma=10.0,\ \phi_\sigma=0.0$}
& $\bm\Sigma = 0.3\,\mathbf I$
& \multicolumn{2}{c|}{$\sigma = 0.4$}
& \makecell[l]{$\eta_i=
\begin{cases}
0.08, & i=1,\\
0.05, & 2\le i\le n
\end{cases}$\\
$\sigma_i=
\begin{cases}
0.5, & i=1,\\
0.4, & 2\le i\le n
\end{cases}$} \\[2pt]
\midrule

Jump intensity
& \multicolumn{2}{c|}{$\lambda_t\equiv \lambda,\ \lambda=0.2$}
& \makecell[l]{$\bm\lambda=\mathrm{linspace}(0.2,0.3,D)$}
& $\lambda = 0.2$ & $\lambda = 0.3$
& \makecell[l]{$\lambda_i \equiv 0.2$\\
$\lambda_0 = 0.25$} \\[2pt]
\midrule

Jump sizes / coeff.
& \makecell[l]{$\alpha_t=\alpha(t),$\\
$\alpha(t)=\alpha_\infty+(\alpha_0-\alpha_\infty)e^{-\kappa_\alpha t}$\\
$\alpha_0=0.3,\ \alpha_\infty=0.2,\ \kappa_\alpha=1.0$}
& \makecell[l]{$\alpha_t=\alpha(t),$\\
$\alpha(t)=\alpha_{\mathrm{bar}}+\alpha_{\mathrm{amp}}\sin\!\big(\tfrac{2\pi t}{P_\alpha}+\phi_\alpha\big)$\\
$\alpha_{\mathrm{bar}}=0.2,\ \alpha_{\mathrm{amp}}=0.1,$\\
$P_\alpha=10.0,\ \phi_\alpha=0.0$}
& \makecell[l]{$\bm\alpha=\mathrm{linspace}(0.3,0.2,D)$}
& $\alpha = 0.3$ & $\alpha = 0.1$
& \makecell[l]{$\alpha_i \equiv 0.2$\\
$\xi_i \equiv 0.2$} \\[2pt]
\midrule

Discount factor
& \multicolumn{2}{c|}{$\beta=1.0$}
& $\beta = 1.0$
& \multicolumn{2}{c|}{$\beta=1.0$}
& $\beta = 1.0$ \\[2pt]
\midrule

Exploration intensity $\gamma$
& $\gamma = 0.0$
& $\gamma = 0.05$
& $\gamma = 0 \text{ or } 0.05$
& $\gamma = 0$
& $\gamma = 0.05$
& $\gamma = 0$ \\[2pt]
\midrule

Learning rate (actor)
& \multicolumn{2}{c|}{$5\times 10^{-4}$}
& $10^{-3}$
& $10^{-3}$
& $6\times 10^{-5}$
& $10^{-3}$ \\[2pt]
\midrule

Learning rate (critic)
& \multicolumn{2}{c|}{$10^{-3}$}
& $10^{-3}$
& \multicolumn{2}{c|}{$10^{-3}$}
& $10^{-3}$ \\[2pt]
\midrule

Iterations
& \multicolumn{2}{c|}{$N_{\mathrm{itr}}=3000$}
& $1{,}000$
& $2{,}000$
& $2{,}000$
& $1{,}000$ \\[2pt]
\midrule

Time step size / $\delta_t$
& \multicolumn{2}{c|}{$T=20,\ K=2000,\ \delta_t=T/K=0.01$}
& $\delta_t=0.01$
& $\delta_t=0.01$
& $\delta_t=0.05$
& $\delta_t=0.02$ \\[2pt]
\midrule

Minibatch size
& \multicolumn{2}{c|}{$L=100$}
& $L = 100$
& $L = 500$
& $L = 200$
& $L = 100$ \\[2pt]
\midrule

Note
& \multicolumn{2}{c|}{\makecell[c]{$\bm R = 2 \bm I_d,\ \bm Q = 0.1 \bm I_d$\\
$K_{\mathrm{actor}} = 15,\ K_{\mathrm{critic}} = 5$}}
& \makecell[l]{$\bm R = 5 \bm I_d,\ \bm Q = 0.5 \bm I_d$\\
$K_{\mathrm{actor}} = 20,\ K_{\mathrm{critic}} = 5$}
& \multicolumn{2}{c|}{\makecell[l]{$p = 0.5$\\
$K_{\mathrm{actor}} = 20,\ K_{\mathrm{critic}} = 5$}}
& \makecell[l]{$\varpi_i \equiv 0.2$\\
$\varrho_i=
\begin{cases}
1.5, & i=1,\\
2.0, & 2\le i\le n
\end{cases}$\\
$K_{\mathrm{actor}} = 30,\ K_{\mathrm{critic}} = 10$} \\
\midrule

Riccati reference solve
& \multicolumn{2}{c|}{$N_{\mathrm{riccati}}=3000,\ \mathrm{d}t_{\mathrm{ODE}}=10^{-3},\ T_{\mathrm{trunc}}=3T$}
& \multicolumn{1}{c|}{--}
& \multicolumn{2}{c|}{--}
& -- \\
\bottomrule
\end{tabular}
}
}
\end{table}

\section{Proof of Theoretical Results}

\subsection{Derivation of the $q$-Function \texorpdfstring{\eqref{eq:q_function}}{(3.9)}}
\label{appen:derivation_q}
We derive \eqref{eq:Qderive}, which leads to the definition of the little $q$-function \eqref{eq:q_function},  via a short-time expansion of $Q_{\delta_t}$ defined in \eqref{eq:Q_delta}.
Fix $\delta_t>0, t\ge 0$, $\bm x\in\mathbb R^d$, and $\bm u\in\mathcal A$, and recall $(\bm X_s^{\bm u})_{s \geq t}$ from Section~\ref{sec:q_func}.
By the tower property and \(
e^{-\beta(s-t)} = e^{-\beta\delta_t}\,e^{-\beta(s-(t+\delta_t))}\), $Q_{\delta_t}$ can be rewritten as
\begin{equation}
    \begin{aligned}
        Q_{\delta_t}(t,\bm x, \bm u;\pi) =
\mathbb E\Big[
\int_0^{\delta_t}
  e^{-\beta s}
  f\bigl(t+s,\bm X_{t+s}^{\bm u},\bm u\bigr)\,\mathrm{d}s +
e^{-\beta\delta_t}\,
J\bigl(t+\delta_t,\bm X_{t+\delta_t}^{\bm u};\pi\bigr)
\;\big|\; \bm X_t^{\bm u}=\bm x
\Big]=: I_1 + I_2 .
    \end{aligned}
\end{equation}

Since $f$ is continous at $(t, \bm x)$ and $(\bm X_s^{\bm u})_{s \geq t}$ is c\`adl\`ag, by dominated convergence theorem, one has
$\lim_{\delta_t\downarrow 0}\frac{1}{\delta_t}\,
\mathbb E [\int_0^{\delta_t} e^{-\beta s}\,f(t+s,\bm X_{t+s}^{\bm u},\bm u)\,\mathrm ds\ | \bm X_t^{\bm u}=\bm x ]
= f(t,\bm x,\bm u),$
and hence
\begin{equation}\label{eq:I1_first_order}
I_1 =
f(t,\bm x,\bm u)\,\delta_t + o(\delta_t),
\quad (\delta_t\downarrow 0).
\end{equation}
For $I_2$, applying It\^o's formula to  $e^{-\beta(h-t)}J(h,\bm X_h^{\bm u};\pi)$ and integrating over $[t,t+\delta_t)$ gives $e^{-\beta\delta_t}J\bigl(t+\delta_t,\bm X_{t+\delta_t}^{\bm u};\pi\bigr)=
J\bigl(t,\bm X_t^{\bm u};\pi\bigr)+ \int_t^{t+\delta_t} e^{-\beta(h-t)}
\bigl[ \partial_t J\bigl(h,\bm X_{h}^{\bm u};\pi \bigr) + {\mathcal L}^{\bm u} J\bigl(h,\bm X_{h}^{\bm u};\pi \bigr) -\beta J\bigl(h,\bm X_{h}^{\bm u};\pi \bigr) \bigr]\,\mathrm{d}h+
\bigl(M_{t+\delta_t}-M_t\bigr),$
where ${\mathcal L}^{\bm u}$ is the generator associated with $\bm u$, and $(M_h)_{h\ge t}$ is a martingale with $
\mathbb E[M_{t+\delta_t}-M_t | \bm X_t^{\bm u}=\bm x]=0.
$
Thus,
\[
\begin{aligned}
I_2=
J(t,\bm x;\pi)
+
\mathbb E\Big[
\int_t^{t+\delta_t}
e^{-\beta(h-t)}
\bigl[
\partial_t J\bigl(h,\bm X_{h}^{\bm u};\pi \bigr)
+
\mathcal L^{\bm u} J\bigl(h,\bm X_{h}^{\bm u};\pi \bigr) -\beta J\bigl(h,\bm X_{h}^{\bm u};\pi \bigr)
\bigr]\, \mathrm{d}h
\ \big|\ \bm X_t^{\bm u}=\bm x
\Big].
\end{aligned}
\]
By the regularity of $J$ and $\bm X_s^{\bm u}$, as $\delta_t \to 0$ we obtain
\begin{equation}\label{eq:I2_first_order}
I_2
=
J(t,\bm x;\pi)
+
\bigl[
\partial_t J(t,\bm x;\pi)
+\mathcal L^{\bm u} J(t,\bm x;\pi)
-\beta J(t,\bm x;\pi)
\bigr]\delta_t
+
o(\delta_t).
\end{equation}

Combining \eqref{eq:I1_first_order} and \eqref{eq:I2_first_order}, we conclude that
\[
\begin{aligned}
Q_{\delta_t}(t,\bm x, \bm u;\pi)=
J(t,\bm x;\pi)
+
\bigl[
\partial_t J(t,\bm x;\pi)
+
f(t,\bm x,\bm u) +
\mathcal L^{\bm u} J(t,\bm x;\pi)
-
\beta J(t,\bm x;\pi)
\bigr]\delta_t
+
o(\delta_t).
\end{aligned}
\]
Recall the Hamiltonian
$\mathscr H(t,\bm x, \bm u;\pi):= f(t,\bm x,\bm u)+\mathcal L^{\bm u} J(t,\bm x;\pi)$ and the definition $q(t,\bm x, \bm u;\pi)
:=
\lim_{\delta_t\downarrow 0}
\frac{Q_{\delta_t}(t,\bm x, \bm u;\pi)-J(t,\bm x;\pi)}{\delta_t}$,
we achieve the desired expression \eqref{eq:q_function}.

\subsection{Proof of Theorem~\texorpdfstring{\ref{thm:PG}}{3.4}}
\label{appen:proof_of_thm2}

We first prove the performance-difference identity \eqref{eq:perf_diff}, following the idea of \cite[Theorem~2]{zhao2023policy} and adapting it to the present time-inhomogeneous jump-diffusion setting. We then apply this identity to a perturbed policy family and differentiate at the reference parameter. This yields the policy-gradient formula and completes the proof of Theorem~\ref{thm:PG}.

To proceed, we first state the following identity, which follows \cite[Lemma~9]{zhao2023policy}.

\begin{lemma}
\label{lemma:diff_two_policies}
Let $\pi$ and $\hat{\pi}$ be two stochastic policies, $J(\cdot,\cdot;\pi)$ be the value function under $\pi$. Let $\mathcal{L}^\pi$ denote the generator under policy $\pi$.
Then, for all $(t,\bm x)$,
\begin{equation}
\begin{aligned}
\label{eq:gen-identity-two-policies}
\tilde{f}(t, \bm x, \hat{\pi}) - \tilde{f}(t, \bm x, \pi)
+ \bigl( \mathcal{L}^{\hat{\pi}} - \mathcal{L}^\pi \bigr) J(t, \bm x;\pi) = \int_{\mathcal{A}}
\big( q(t, \bm x, \bm u; \pi) - \gamma \log\hat\pi(\bm u\mid t,\bm x) \big)\,
\hat{\pi}(\bm u \mid t, \bm x)\mathrm{d} \bm u.
\end{aligned}
\end{equation}
\end{lemma}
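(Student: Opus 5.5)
The plan is to prove the identity pointwise in $(t,\bm x)$ by direct computation. The ingredients are the definitions of $\tilde f$ in \eqref{eq:tilde-f-def}, of the $\pi$-averaged generator in \eqref{eq:gen-pi}, and of the $q$-function in \eqref{eq:q_function}, together with the policy-evaluation PIDE \eqref{eq:policy-eval-PIDE} for $J(\cdot;\pi)$.

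\textbf{Step 1: expand the right-hand side.} Using $\int_{\mathcal A}\hat\pi(\bm u\mid t,\bm x)\,\mathrm d\bm u=1$ and $\mathscr H(t,\bm x,\bm u,\nabla_x J,\nabla_x^2 J)=f(t,\bm x,\bm u)+(\mathcal L^{\bm u}J)(t,\bm x;\pi)$, we get
\[
\int_{\mathcal A}\bigl(q(t,\bm x,\bm u;\pi)-\gamma\log\hat\pi(\bm u\mid t,\bm x)\bigr)\hat\pi(\bm u\mid t,\bm x)\,\mathrm d\bm u
=\partial_t J-\beta J+\mathcal L^{\hat\pi}J+\tilde f(t,\bm x;\hat\pi),
\]
where every quantity is evaluated at $(t,\bm x)$ with $J=J(\cdot;\pi)$. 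The key observation is that the $f$-part of the Hamiltonian, integrated against $\hat\pi$, combines with the $-\gamma\log\hat\pi$ term to produce exactly $\tilde f(t,\bm x;\hat\pi)$. Likewise, the generator part integrates to $\mathcal L^{\hat\pi}J$ by definition \eqref{eq:gen-pi}.

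\textbf{Step 2: use the PIDE for $J(\cdot;\pi)$.} By \eqref{eq:policy-eval-PIDE},
\[
\partial_t J-\beta J=-\mathcal L^{\pi}J-\tilde f(t,\bm x;\pi).
\]
Substituting this into Step 1 turns the right-hand side into $\tilde f(\hat\pi)-\tilde f(\pi)+(\mathcal L^{\hat\pi}-\mathcal L^{\pi})J$, which is the left-hand side of the claimed identity.

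\textbf{Main obstacle.} The only delicate point is justifying Steps 1 and 2 rather than the algebra. We need $J(\cdot;\pi)\in C^{1,2}$ so that $\mathcal L^{\bm u}J$ and the PIDE make sense pointwise; this is implicitly assumed wherever $q$ is defined. We also need the integrals $\int f\,\hat\pi$, $\int \hat\pi\log\hat\pi$ and $\int(\mathcal L^{\bm u}J)\,\hat\pi$ to be finite, so that splitting the integral of the sum into a sum of integrals (via Fubini and linearity) is legitimate. This includes the nonlocal jump term, where we interchange the $\nu(\mathrm d\bm z)$-integral with the $\hat\pi(\mathrm d\bm u)$-integral. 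I would state these as standing integrability assumptions, matching the regularity already used in Lemma~\ref{lem:gae-approximation}, and then the proof reduces to the two-line computation above.
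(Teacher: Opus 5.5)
Your proof is correct and follows essentially the same argument the paper relies on. The paper gives no separate proof and defers to \cite[Lemma~9]{zhao2023policy}, where the argument is this pointwise computation: integrate the definition \eqref{eq:q_function} of $q$ against $\hat\pi$ to obtain $\partial_t J-\beta J+\mathcal L^{\hat\pi}J+\tilde f(t,\bm x;\hat\pi)$, then eliminate $\partial_t J-\beta J$ using the policy-evaluation PIDE \eqref{eq:policy-eval-PIDE}. Your remarks on $C^{1,2}$ regularity of $J$ and on the integrability needed to split the integrals and swap the $\nu$- and $\hat\pi$-integrals are appropriate standing assumptions that the paper leaves implicit.
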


\begin{proof}[Proof of performance-difference \eqref{eq:perf_diff}]
Recall $\mu^{\hat\pi,t,\bm x}$ for the discounted occupation measure induced by~$\hat{\pi}$ starting from $(t,\bm x)$.
Apply Lemma~\ref{lemma:gen_property_shift-final} to  $J(s, \bm y;\pi)$ gives
\begin{equation}
\label{eq:dynkin-Jpi-hatpi}
\int_{[t,\infty)\times \mathbb{R}^d}
\big( -\partial_s J(\cdot,\cdot;\pi) - \mathcal L^{\hat{\pi}} J(\cdot,\cdot;\pi) + \beta J(\cdot,\cdot;\pi) \big)(s, \bm y)
\, \mu^{\hat{\pi},t,\bm x}(\mathrm{d} s,\mathrm{d} \bm y)
= J(t,\bm x;\pi).
\end{equation}
On the other hand, since $J(\cdot,\cdot;\pi)$ is the value function under $\pi$, it satisfies \eqref{eq:policy-eval-PIDE}.
Integrating it against the measure $\mu^{\hat\pi,t,\bm x}$ gives
\begin{equation}
\label{eq:eval-integrated-hatpi}
0 = \int_{[t,\infty)\times \mathbb{R}^d}
\big( \partial_s J(\cdot,\cdot;\pi) + \mathcal L^\pi J(\cdot,\cdot;\pi)
      + \tilde f(\cdot,\cdot;\pi) - \beta J(\cdot,\cdot;\pi) \big)(s, \bm y)\,
\mu^{\hat{\pi},t,\bm x}(\mathrm{d} s,\mathrm{d} \bm y).
\end{equation}
Adding \eqref{eq:eval-integrated-hatpi} to \eqref{eq:dynkin-Jpi-hatpi} produces
\begin{equation}
\label{eq:Jpi-rep-under-hatpi}
J(t,\bm x;\pi)
= \int_{[t,\infty)\times \mathbb{R}^d}
\big[
   \tilde f(s, \bm y;\pi)
   + \big( \mathcal L^\pi - \mathcal L^{\hat{\pi}} \big) J(s, \bm y;\pi)
\big]\,
\mu^{\hat{\pi},t,\bm x}(\mathrm{d} s,\mathrm{d} \bm y).
\end{equation}
By the definition of $J(t,\bm x;\hat\pi)$ and Lemma~\ref{lem:occupation-identity-inhom-shift}, we also have $J(t,\bm x;\hat\pi) = \int_{[t,\infty)\times\mathbb R^d}
\tilde f(s, \bm y;\hat\pi)\,
\mu^{\hat\pi,t,\bm x}(\mathrm ds,\mathrm d\bm y).$
Subtracting it from \eqref{eq:Jpi-rep-under-hatpi},
\begin{equation}
    \begin{aligned}
     J(t,\bm x;\hat\pi) - J(t,\bm x;\pi)
= \int_{[t,\infty)\times\mathbb R^d}
\big(
\tilde f(s, \bm y;\hat\pi) - \tilde f(s, \bm y;\pi)
+ \big(\mathcal L^{\hat\pi}-\mathcal L^\pi\big)J(s, \bm y;\pi)
\big)\,
\mu^{\hat\pi,t,\bm x}(\mathrm ds,\mathrm d\bm y).
    \end{aligned}
\end{equation}
By Lemma~\ref{lemma:diff_two_policies},
\[
\resizebox{\linewidth}{!}{$
\begin{aligned}
J(t,\bm x;\hat\pi) - J(t,\bm x;\pi)
&=
\int_{[t,\infty)\times\mathbb R^d}
\int_{\mathcal A}
\big( q(s, \bm y, \bm u;\pi) - \gamma \log\hat\pi(\bm u\mid s,\bm y) \big)\,
\hat\pi(\bm u\mid s,\bm y)\, \mathrm d\bm u
\mu^{\hat\pi,t,\bm x}(\mathrm ds,\mathrm d\bm y).
\end{aligned}
$}
\]
This proves the performance-difference formula.

We now prove the policy-gradient formula. Fix $(t,\bm x)\in[0,\infty)\times\mathbb R^d$ and a reference parameter $\theta_0$.
It suffices to prove for any $h$
\begin{align}
\frac{\mathrm d}{\mathrm d\varepsilon}J(t,\bm x;\pi_{\theta_0+\varepsilon h})\big|_{\varepsilon=0}
&=
\int_{[t,\infty)\times\mathbb R^d}\!
\int_{\mathcal A}
\big\langle h,\nabla_\theta \log\pi_\theta(\bm u\mid s,\bm y)\big|_{\theta=\theta_0}\big\rangle\,\\
&\quad\big(q(s,\bm y,\bm u;\pi_{\theta_0})-\gamma\log\pi_{\theta_0}(\bm u\mid s,\bm y)\big) \nonumber \pi_{\theta_0}(\bm u\mid s,\bm y)\,
\mathrm d\bm u\mu^{\theta_0,t,\bm x}(\mathrm ds,\mathrm d\bm y)\,,\label{eq:grad_PG}
\end{align}
then normalizing $\beta\mu^{\theta_0,t,\bm x}$ yields the expectation form in the theorem.
Apply the performance-difference formula with baseline $\pi_{\theta_0}$ and perturbed policy
$\pi_{\theta_0+\varepsilon h}$ yields
\begin{equation}
\label{eq:pdiff_eps}
J(t,\bm x;\pi_{\theta_0+\varepsilon h})-J(t,\bm x;\pi_{\theta_0})
=
\int_{[t,\infty)\times\mathbb R^d}
\upsilon(\varepsilon; s,\bm y)\,
\mu^{\theta_0+\varepsilon h,t,\bm x}(\mathrm ds,\mathrm d\bm y),
\end{equation}
where $\upsilon(\varepsilon; s,\bm y)$ is defined as,
\begin{equation}
\label{eq:def_upsilon_eps}
\upsilon(\varepsilon; s,\bm y)
:=
\int_{\mathcal A}
\pi_{\theta_0+\varepsilon h}(\bm u\mid s,\bm y)
\Big(q(s,\bm y,\bm u;\pi_{\theta_0})-\gamma\log\pi_{\theta_0+\varepsilon h}(\bm u\mid s,\bm y)\Big)\,
\mathrm d\bm u.
\end{equation}
Note that $\upsilon(0;s,\bm y)=0$ for all $(s,\bm y)$ due to the definition \eqref{eq:q_function} of $q$ and the PDE \eqref{eq:policy-eval-PIDE} satisfied by $J(\cdot,\cdot;\pi_{\theta_0})$.
Consequently, add and subtract $\mu^{\theta_0,t,\bm x}$ from the right hand side of  \eqref{eq:pdiff_eps} gives:
\begin{align}
 & J(t,\bm x;\pi_{\theta_0+\varepsilon h})-J(t,\bm x;\pi_{\theta_0}) =   \int_{[t,\infty)\times\mathbb R^d}
\big(\upsilon(\varepsilon; s,\bm y)-\upsilon(0; s,\bm y)\big)\,
\mu^{\theta_0+\varepsilon h,t,\bm x}(\mathrm ds,\mathrm d\bm y)\nonumber \\
&=\int_{[t,\infty)\times\mathbb R^d}
\big(\upsilon(\varepsilon; s,\bm y)-\upsilon(0; s,\bm y)\big)\,
\mu^{\theta_0,t,\bm x}(\mathrm ds,\mathrm d\bm y)\nonumber\\
& \quad + \int_{[t,\infty)\times\mathbb R^d}
\big(\upsilon(\varepsilon; s,\bm y)-\upsilon(0; s,\bm y)\big)\,
\big(\mu^{\theta_0+\varepsilon h,t,\bm x}(\mathrm ds,\mathrm d\bm y)-\mu^{\theta_0,t,\bm x}(\mathrm ds,\mathrm d\bm y)\big).\label{eq:pdiff_eps2}
\end{align}

Next, differentiating \eqref{eq:def_upsilon_eps} and setting $\varepsilon = 0$ provides:
\begin{align}
\frac{\mathrm d}{\mathrm d\varepsilon}\upsilon(\varepsilon;s,\bm y)\big|_{\varepsilon=0} =&
\int_{\mathcal A}
\big\langle h,\nabla_\theta \pi_\theta(\bm u \mid s, \bm y)\big|_{\theta=\theta_0}\big\rangle
\big(q(s, y,\bm u; \pi_{\theta_0})-\gamma\log\pi_{\theta_0}(\bm u \mid s, \bm y)\big)\,\mathrm d\bm u \nonumber\\
& -\gamma\int_{\mathcal A}\pi_{\theta_0}(\bm u \mid s, \bm y )\big\langle h,\nabla_\theta \log\pi_\theta(\bm u \mid s, \bm y)\big|_{\theta=\theta_0}\big\rangle\,\mathrm d\bm u .
\label{eq:upsilon_deriv_intermediate}
\end{align}
Using $\nabla_\theta \pi_\theta=\pi_\theta\nabla_\theta\log\pi_\theta$ and the normalization
$\int_{\mathcal A}\pi_\theta(\bm u)\,\mathrm d\bm u=1$ (hence $\int_{\mathcal A}\nabla_\theta\pi_\theta(\bm u)\,\\ \mathrm d\bm u=0$), the second term vanishes and \eqref{eq:upsilon_deriv_intermediate} can be written as
\begin{equation}
\begin{aligned}
    \frac{\mathrm d}{\mathrm d\varepsilon}\upsilon(\varepsilon;s,\bm y)\big|_{\varepsilon=0} =
\int_{\mathcal A}
\big\langle h,\nabla_\theta \log\pi_\theta(\bm u \mid s, \bm y)\big|_{\theta=\theta_0}\big\rangle \big(q(s, y,\bm u; \pi_{\theta_0})-\gamma\log\pi_{\theta_0}(\bm u \mid s, \bm y)\big)\pi_\theta(\bm u \mid s, \bm y)\,\mathrm d\bm u.
\label{eq:upsilon_deriv_final}
\end{aligned}
\end{equation}

Dividing both sides of \eqref{eq:pdiff_eps2} by $\varepsilon$, noticing that the second term is $o(\varepsilon)$, letting $\varepsilon\to 0$ and using \eqref{eq:upsilon_deriv_final} gives \eqref{eq:grad_PG}. Normalizing $\beta\mu^{\theta_0,t,\bm x}$ then yields the expectation form stated in the theorem.
\end{proof}

\subsection{Derivation of Lemma~\texorpdfstring{\ref{lem:gae-approximation}}{3.5}}
\label{appen:derivation_gae}

\begin{proof}
Fix $\delta_t>0, t\ge 0$, $\bm x\in\mathbb R^d$, and $\bm u\in\mathcal A$, and recall $(\bm X_s^{\bm u})_{s \geq t}$ from Section~\ref{sec:q_func}. Apply the It\^o-L\'evy formula to
$e^{-\beta(s-t)}J(s,\bm X_s^{\bm u};\pi)$ on $[t,t+\delta_t)$, we obtain
\begin{equation}
    \begin{aligned}
        &e^{-\beta\delta_t}J\bigl(t+\delta_t, \bm X_{t+\delta_t}^{\bm u};\pi\bigr) - J\bigl(t, \bm X_t^{\bm u};\pi\bigr)\\
        &\quad =
\int_t^{t+\delta_t} e^{-\beta(s-t)}
\bigl( \partial_s J + \mathcal L^{\bm u} J - \beta J \bigr)(s,\bm X_s^{\bm u};\pi)\,\mathrm{d}s
\;+\; \tilde M_{t+\delta_t}- \tilde M_t,
    \end{aligned}
\end{equation}
where $\tilde M$ collects the stochastic integrals with respect to the Brownian motion and the compensated Poisson random measure.
Taking conditional expectation with respect to $\mathcal F_t$ and using the integrability assumption in Lemma~\ref{lem:gae-approximation} (which ensures the local martingale term is a true martingale on $[t,t+\delta_t)$), we get
\begin{equation}
    \begin{aligned}
\mathbb E\big[
e^{-\beta\delta_t}J\bigl(t+\delta_t, \bm X_{t+\delta_t}^{\bm u};\pi\bigr) - J\bigl(t, \bm X_t^{\bm u};\pi\bigr)
\,|\,\mathcal F_t
\big] =
\mathbb E\Big[
\int_t^{t+\delta_t} e^{-\beta(s-t)}
\bigl( \partial_s J + \mathcal L^{\bm u} J - \beta J \bigr)(s,\bm X_s^{\bm u};\pi)\,\mathrm{d}s
\,\big|\,\mathcal F_t
\Big].
    \end{aligned}
\end{equation}
By continuity of $J$ and the coefficients, together with dominated convergence (using the integrability assumption in Lemma~\ref{lem:gae-approximation}), as $\delta_t\to 0$,
\begin{align*}
\mathbb E\Big[
\int_t^{t+\delta_t} e^{-\beta(s-t)}
\bigl( \partial_s J + \mathcal L^{\bm u} J - \beta J \bigr)(s,\bm X_s^{\bm u};\pi)\,\mathrm{d}s
\,\big|\,\mathcal F_t
\Big]
=
\bigl(\partial_t J + \mathcal L^{\bm u} J - \beta J\bigr)(t,\bm X_t^{\bm u};\pi)\,\delta_t
+ o(\delta_t).
\end{align*}

Since $f(t,\bm X_t^{\bm u},\bm u)$ is $\mathcal F_t$-measurable, one has
\begin{equation}
    \begin{aligned}
    \mathbb E \big[
f(t,\bm X_t^{\bm u},\bm u)\,\delta_t
+ e^{-\beta\delta_t}J\bigl(t+\delta_t, \bm X_{t+\delta_t}^{\bm u};\pi\bigr)
- J\bigl(t,\bm X_t^{\bm u};\pi\bigr)
|\mathcal F_t
\big] =
\bigl(\partial_t J + \mathcal L^{\bm u} J - \beta J + f\bigr)(t,\bm X_t^{\bm u}, \bm u;\pi)\,\delta_t
+ o(\delta_t).
    \end{aligned}
\end{equation}
Dividing it by $\delta_t$ and using \eqref{eq:gae_def_J} and the definition \eqref{eq:q_function} of $q$-function gives \eqref{eq:gae-first-order_J}.
\end{proof}

\subsection{Proof for \texorpdfstring{\eqref{eq:V_pinn}}{(4.10)}}
\begin{proof}
Let $\mathscr H(u):=\mathscr H(x,u,\nabla_xV,\nabla_x^2V),\, Z(x):=\int_{\mathcal A}\exp\!\left(\frac{1}{\gamma}\mathscr H(\tilde u)\right)\,d\tilde u.$
By the Gibbs representation of the optimal policy, $\pi^*(u\mid x)=\frac{\exp\!\left(\frac{1}{\gamma}\mathscr H(u)\right)}{Z(x)}.$
Hence $\log \pi^*(u\mid x)=\frac{1}{\gamma}\mathscr H(u)-\log Z(x),$
which implies $\mathscr H(u)-\gamma\log\pi^*(u\mid x)=\gamma\log Z(x).$
For the entropy-regularized HJB equation,
\[
\beta V(x)
=
\int_{\mathcal A}
\bigl(\mathscr H(u)-\gamma\log\pi^*(u\mid x)\bigr)\pi^*(u\mid x)\,du\,,
\]
substituting $\pi^*$ into HJB and using the normalization
$\int_{\mathcal A}\pi^*(u\mid x)\,du=1$ yields
\[
\beta V(x)
=
\gamma\log Z(x)
=
\gamma\log\int_{\mathcal A}
\exp\!\left(\frac{1}{\gamma}\mathscr H(x,u,\nabla_xV,\nabla_x^2V)\right)\,du.
\]
which is exactly \eqref{eq:V_pinn}.
\end{proof}

\section{Proof For Benchmarks}\label{sec:benchmark}

\subsection{Closed-form Solution in the LQ Case}
\label{sec:lq_closed_form}

For the linear-quadratic problem, we adopt the quadratic ansatz
\begin{equation}
\label{eqn:lq_ansatz_unified}
V(t,\bm x)=\bm x^\top \bm H(t)\bm x+g_\gamma(t),
\qquad
\bm H(t)\in\mathbb S^{d},\quad g_\gamma(t)\in\mathbb R.
\end{equation}
We show that, under this ansatz, the entropy-regularized HJB admits an explicit Gaussian optimizer and reduces to a Riccati--scalar ODE system.

\begin{proof}
Recall that $f(t,\bm x,\bm u)=-(\bm u^\top \bm R(t)\bm u+\bm x^\top \bm Q(t)\bm x),
\qquad
\bm R(t)\succ \bm 0,\ \bm Q(t)\succeq \bm 0.$
Substituting \eqref{eqn:lq_ansatz_unified} into the entropy-regularized HJB, the policy-dependent part becomes
\[
\sup_{\pi(\cdot\mid t,\bm x)}
\left\{
\mathbb E_\pi[-\bm u^\top \bm R(t)\bm u
+2\,\bm u^\top \bm B(t)^\top \bm H(t)\bm x]
+\gamma\,\mathbb E_\pi[-\log\pi(\bm u\mid t,\bm x)]
\right\},
\]
By the optimizer is of Gibbs form, $\pi^*(\bm u\mid t,\bm x)\propto
\exp\!\left(\frac{\mathscr H(t,\bm x,\bm u)}{\gamma}\right) =
\mathcal N\!\left(
\bm R(t)^{-1}\bm B(t)^\top \bm H(t)\bm x,\,
\frac{\gamma}{2}\bm R(t)^{-1}
\right)$, and $\mathbb E_{\pi^*}[\bm u\mid t,\bm x] =\bm R(t)^{-1}\bm B(t)^\top \bm H(t)\bm x,
\,\,
\operatorname{Var}_{\pi^*}(\bm u\mid t,\bm x) = \frac{\gamma}{2}\bm R(t)^{-1}$, matching the coefficients in HJB, we obtain
\begin{equation}
\label{eq:lq_riccati_scalar_inhomo_H}
\bm H'(t)
=
\beta \bm H(t)+\bm Q(t)-\bm H(t)\bm B(t)\bm R(t)^{-1}\bm B(t)^\top \bm H(t),
\end{equation}
and
\begin{equation}
\label{eq:lq_riccati_scalar_inhomo_g}
g_\gamma'(t) =
\beta g_\gamma(t)
-\operatorname{Tr}\!\big(\bm\Sigma(t)\bm\Sigma(t)^\top \bm H(t)\big)
-\operatorname{Tr}\!\big(\bm\Lambda(t)\,\operatorname{diag}(\bm\alpha(t))\,\bm H(t)\,\operatorname{diag}(\bm\alpha(t))\big)
-c_\gamma(t),
\end{equation}
where
\[
c_\gamma(t)
=
\frac{\gamma}{2}\bigl(m\log(\pi\gamma)-\log\det\bm R(t)\bigr),
\qquad
\bm\Lambda(t):=\operatorname{diag}(\lambda_i(t)).
\]
Hence the value function is obtained.

In the standard case $\gamma=0$, the entropy term disappears and the optimal stochastic policy collapses to the deterministic feedback control
\begin{equation}
\label{eqn:lq_u_star_convergent_merged}
\bm u^*(t,\bm x)=\bm R(t)^{-1}\bm B(t)^\top \bm H(t)\bm x,
\end{equation}
which is exactly the mean of optimal policy when $\gamma>0$, and it can be obtained directly from the first-order optimality condition. Accordingly, \eqref{eq:lq_riccati_scalar_inhomo_H} remains unchanged, while \eqref{eq:lq_riccati_scalar_inhomo_g} reduces to the same scalar equation without the entropy correction term \(c_\gamma(t)\).

Finally, the ODE system above determines the candidate solution, and the admissible branch is selected by the coefficient class. If the coefficients converge as \(t\to\infty\), we impose
\[
\lim_{t\to\infty}\bm H(t)=\bm H_\infty,
\qquad
\lim_{t\to\infty}g_\gamma(t)=g_\infty^\gamma,
\]
where \(\bm H_\infty\) solves the limiting algebraic Riccati equation and \(g_\infty^\gamma\) is determined by the corresponding stationary scalar balance. If the coefficients are \(P\)-periodic, then we impose the periodic boundary condition
\[
\bm H(t+P)=\bm H(t),\qquad g_\gamma(t+P)=g_\gamma(t).
\]
In the time-homogeneous case, \(\bm H'(t)=0\) and \(g_\gamma'(t)=0\), so the system reduces to the associated algebraic Riccati equation together with the stationary scalar equation.
\end{proof}

\subsection{Closed-form Solution of  Multi-agent Game}
\label{appen:proof_multi-agent_game}

\begin{proof}
Recall that $Y_t^i := \frac1n\sum_{j\neq i} X_t^j$ represents the average wealth of other agents, and that $V^i(t, x, y)$ is defined in (4.15).
Then $Y_t^i$ satisfies
\begin{equation}
\begin{aligned}
\mathrm{d}Y_t^i
&= \widehat{ub}\,\mathrm{d}t
  + \widehat{u\sigma}\,\mathrm{d}W_t^0
  + \frac1n \sum_{j\neq i} u_j\eta_j\,\mathrm{d} W_t^j
  + \frac1n \sum_{j\neq i} u_j\alpha_j\,\mathrm{d}M_t ^j
  + \widehat{u\xi}\,\mathrm{d}M_t ^0,
\end{aligned}
\label{eq:SDE-Y-i}
\end{equation}
where $\widehat{ub}
\!:= \frac1n\sum_{j\neq i} u_j b_j$,
$\widehat{u\sigma}
\!:= \frac1n\sum_{j\neq i} u_j\sigma_j$,
$\widehat{u^2\eta^2}\!:= \frac1n\sum_{j\neq i} u_j^2\eta_j^2$, $
\widehat{u\xi}
\!:= \frac1n\sum_{j\neq i} u_j\xi_j$.
By the dynamic programming principle, $V^i$ satisfies the HJB equation
\begin{equation}
\partial_t V^i
+ \sup_{u_i\in\mathbb R}
\bigl\{
\mathcal L^{i,u_i} V^i + f_i
\bigr\}
- \beta V^i
= 0,
\label{eq:HJB-general-CARA}
\end{equation}
where $\mathcal L^{i,u_i}$ is the generator of the pair
$(X_t^i,Y_t^i)$ under the control $u_i$ for agent $i$, with the controls
$u_j$, $j\neq i$ held fixed. A direct computation yields
\begin{align}
\mathcal L^{i,u_i} V^i(t,x,y)
&=
\bigl[
u_i b_i\,V_x^i
+ \widehat{ub}\,V_y^i
+ \tfrac12(\eta_i^2+\sigma_i^2)u_i^2\,V_{xx}^i
+ \tfrac12\bigl((\widehat{u\sigma})^2 + \tfrac1n\widehat{u^2\eta^2}\bigr)V_{yy}^i
+ u_i\sigma_i\widehat{u\sigma}\,V_{xy}^i
\bigr]_{(t,x,y)}
\notag\\
&\quad
+ \lambda_i\bigl(
V^i(t,x+u_i\alpha_i,y) - V^i(t,x,y)
- u_i\alpha_i\,V_x^i(t,x,y)
\bigr)
\notag\\
&\quad
+ \sum_{j\neq i}\lambda_j\bigl(
V^i\bigl(t,x,y+\tfrac{u_j\alpha_j}{n}\bigr)
- V^i(t,x,y)
- \tfrac{u_j\alpha_j}{n}\,V_y^i(t,x,y)
\bigr)
\notag\\
&\quad
+ \lambda_0\bigl(
V^i(t,x+u_i\xi_i,y+\widehat{u\xi})
- V^i(t,x,y)
- u_i\xi_i\,V_x^i(t,x,y)
- \widehat{u\xi}\,V_y^i(t,x,y)
\bigr).
\label{eq:generator-i-u}
\end{align}

We seek solutions of the form
\begin{equation}
V^i(t,x,y)
= -\frac{1}{K_i(t)}\exp\big(
-\frac{1}{\varrho_i}
\bigl((1 - \frac{\varpi_i}{n})x -\varpi_i y\bigr)
\big).\label{eq:ansatz-CARA}
\end{equation}
Define $\chi_i
 = \frac{1-\varpi_i/n}{\varrho_i}$, and
$ \rho_i = \frac{\varpi_i}{\varrho_i}$. Substituting the ansatz into \eqref{eq:generator-i-u} and dividing by $V^i<0$,
the $u_i$-dependent terms from the drift and diffusion are
$
\frac{1}{V^i}\Bigl(
u_i b_i V_x^i
+ \tfrac12(\eta_i^2+\sigma_i^2)u_i^2 V_{xx}^i
+ u_i\sigma_i\widehat{u\sigma}\,V_{xy}^i
\Bigr)
=
-\chi_i b_i u_i
+ \tfrac12\chi_i^2(\eta_i^2+\sigma_i^2)u_i^2
- \chi_i\rho_i\sigma_i\widehat{u\sigma}\,u_i.
$
The jump terms contribute
$
\frac{\lambda_i}{V^i}
\big(V^i(t,x+u_i\alpha_i,y)-V^i(t,x,y)-u_i\alpha_i V_x^i\big)
=
\lambda_i\bigl(e^{-\chi_i\alpha_i u_i}-1+\chi_i\alpha_i u_i\bigr)$,
and $\frac{\lambda_0}{V^i}\big(
V^i(t,x+u_i\xi_i,y+\widehat{u\xi}) - V^i(t,x,y) - u_i\xi_i V_x^i - \widehat{u\xi} V_y^i \big)
=
\lambda_0\big(
e^{-\chi_i\xi_i u_i + \rho_i\widehat{u\xi}}
-1 + \chi_i\xi_i u_i - \rho_i\widehat{u\xi}
\big).$
Collecting all $u_i$-dependent terms defines the function  $\Psi_i$:
\begin{equation}
\begin{aligned}
    \Psi_i(u)=
-\chi_i b_i u
+ \frac12\chi_i^2(\eta_i^2+\sigma_i^2)u^2
- \chi_i\rho_i\sigma_i\widehat{u\sigma}\,u
+ \lambda_i\bigl(e^{-\chi_i\alpha_i u}-1+\chi_i\alpha_i u\bigr) + \lambda_0\Bigl(
e^{-\chi_i\xi_i u + \rho_i\widehat{u\xi}} + \chi_i\xi_i u
\Bigr).
\label{eqn:Psi-def-CARA-correct-u}
\end{aligned}
\end{equation}
The first order condition $\Psi_i'(u) = 0$ gives a candidate optimal control for agent $i$. Since $\Psi_i$ is strictly convex in $u$, this maximizer is unique. Consequently, a collection of feedback controls $\bm u^*=(u_1^*,\dots,u_n^*)$ forms a Markovian Nash equilibrium if and only if it solves the coupled system $\Psi_i'(u_i^\ast) = 0$, $i = 1, 2, \ldots, n$. This proves the first part of the proposition.

Matching the coefficients and using time homogeneity gives
$ \Psi_i(u_i^*) + K_i(t) + C_i - \beta = 0$, where $C_i$ is independent of $u_i$ and given by
\begin{equation}
\begin{aligned}
    C_i=
\rho_i \,\widehat{ub}
+ \frac{1}{2}\,\rho_i^2\!
  \big(
    \frac{1}{n^2}\sum_{j\neq i}(u_j \eta_j)^2
    + \big(\frac{1}{n}\sum_{j\neq i} u_j \sigma_j\big)^2
  \big) + \sum_{j\neq i}\lambda_j\!\big(
      \exp\!\big(\frac{\varpi_i}{\varrho_i}\,\frac{u_j \alpha_j}{n}\big) - 1
      - \frac{\varpi_i}{\varrho_i}\,\frac{u_j \alpha_j}{n}
  \big)
- \lambda_0\,\rho_i\,\widehat{u\xi}.
\end{aligned}
\end{equation}
Hence $K_i^* = \beta - \Psi_i(u_i^*) - C_i$, and the value function of agent $i$ is
\begin{equation}
V^i(x,y)
= -\frac{1}{\beta - \Lambda_i^*}
\exp\left(
-\frac{1}{\varrho_i}
\Bigl(\bigl(1-\tfrac{\varpi_i}{n}\bigr)x - \varpi_i y\Bigr)
\right),
\end{equation}
where $\Lambda_i^* := \Psi_i(u_i^*) + C_i,$ and $\beta > \Lambda_i^*$ ensures the concavity.
\end{proof}

\end{document}